\documentclass[12pt,reqno]{amsart}   	% use "amsart" instead of "article" for AMSLaTeX format
\usepackage[letterpaper, margin=1in]{geometry}
\usepackage{amssymb}
\usepackage{amsmath}
\usepackage{amsfonts}
\usepackage[utf8]{inputenc}
\usepackage{amsthm}
\usepackage{tikz-cd}
\usepackage{array}

\newtheorem{thm}{Theorem}[section]
\newtheorem{lemma}[thm]{Lemma}
\newtheorem{df}[thm]{Definition}
\newtheorem{prop}[thm]{Proposition}

\newtheorem{cor}[thm]{Corollary}

\newtheorem{prob}[thm]{Problem}

\newtheorem{conj}[thm]{Conjecture}

\newtheoremstyle{remark}
    {\dimexpr\topsep/2\relax} % space above
    {\dimexpr\topsep/2\relax} % space below
    {}          % body font
    {}          % indent amount
    {\bfseries} % theorem head font
    {.}         % punctuation after theorem head
    {.5em}      % space after theorem head
    {}          % theorem hed spec. (empty = "normal")

\theoremstyle{remark}
\newtheorem{remark}[thm]{Remark}
\newcommand{\RR}{\mathcal{R}}
\newcommand{\Z}{\mathbb{Z}}

\newcommand{\R}{\mathbb{R}}

\newcommand{\N}{\mathbb{N}}
\newcommand{\OO}{\mathcal{O}}

\newcommand{\brac}[1]{\left ( #1 \right )}

\title{Sums of squares on curves and surfaces}
\author{Bartłomiej Bychawski, Bartosz Głowacki, Tomasz  Kowalczyk}
\date{}

\begin{document}

\keywords{Sums of squares, higher Pythagoras number, real algebraic curves, real algebraic surfaces, 0-regulous functions, the bad set.}
\subjclass[2020]{11P05, 14P05}
\maketitle

\begin{abstract}
We study sums of higher even powers in the coordinate rings of singular planar curves $x^M=y^m$ for coprime positive integers $m<M$. We then show that the $2s$-Pythagoras number of real algebras of the form $\R[x,y,\sqrt{f_1},\sqrt{f_2},\dots, \sqrt{f_n}]$ are infinite, under some mild assumptions on the polynomials $f_1,f_2,\dots, f_n \in \R[x,y]$. We prove that all of the higher even Pythagoras numbers are finite for the ring of $0$-regulous functions on a $0$-regulous variety. We then show that the codimension of the bad set of order $2n$, for $n>1$, can be of codimension $2$, contrary to the quadratic case. 
\end{abstract}

\section*{Introduction}
In this paper we are interested in the problem of determining whether the (higher) Pythagoras numbers are finite or not for some functions rings arising in real algebraic geometry.
 Let us start with a definition.
 \begin{df}
For a commutative ring $R$ with identity and a positive integer $n>0$ its $2n$-Pythagoras number, denoted by $p_{2n}(R)$, is the smallest positive integer $g$ such that any element which is a sum of $2n$th powers can be written as a sum of at most $g$ $2n$th powers. If such number does not exist, we put $p_{2n}(R)=\infty$.
\end{df}
Here, we are interested only in sums of even powers. This is because, if we consider sums of $n$th powers for $n$ odd, then $-1$ is an $n$th power, and the $n$-Pythagoras number is finite, provided that $n!$ is invertible in $R$ (cf. \cite[Introduction]{km2024}).

Quite a lot is known about the $2$-Pythagoras number. Problem of determining finiteness of those invariants is particularly interesting in the case of a real rings (i.e. $-1$ is not a sum of squares). Any affine $\R$-algebra of dimension $1$ has finite $2$-Pythagoras number \cite{cldr1982}. On the other hand, any excellent ring of real dimension at least 3 has infinite $2$-Pythagoras number \cite{frs2004}. Dimension $2$ is least understood. We have $p_{2n}(\R[x,y])=\infty$ \cite{cldr1982, kv2023} as well as $p_{2n}(\R[x,y,\sqrt{f(x,y)}])=\infty$ for polynomials $f(x,y)$ satisfying some mild conditions (see \cite{bk2024, kv2023}). Surprisingly, it was recently proved that $p_2(\OO(\R^2))=4$ (see \cite{kowalczyk2025}), where $\OO(\R^2)$ is the ring of regular functions on the real plane. Let us also mention several recent papers on this topic \cite{benoist2025, bds2025, dghmy2026, dombek2025, hjp2025, krs2022, santiago2025,  tinkova2025}.

Higher Pythagoras number are much less understood. It is known that the theory of higher degree forms is much more complicated, than that of quadratic forms. Let us mention few papers on this topic  \cite{bk2023, becker1982, bp1996, grimm2015, km2024, kv2023}

The content of the paper is as follows. Section 1 contains the treatment of higher Pythagoras numbers of the coordinate rings of singular planar curves of the form $\{x^M=y^m \} \subset \R^2$ for coprime positive integers $m,M$. Next Section is devoted to the study of the $2s$-Pythagoras number of $2$-dimensional $\R$-algebras of the form $\R[x,y,\sqrt{f_1},\sqrt{f_2},\dots, \sqrt{f_n}]$ which correspond to surfaces in $\R^N$. This is a generalization of the results from \cite{bk2024}, where only coordinate rings of hypersurfaces in $\R^3$ were considered. Main aim of Section 3 is to show, that the higher even Pythagoras numbers of the ring of $0$-regulous functions on a $0$-regulous variety is finite. This provides a first family of rings -- other than fields and valuation rings -- with finite higher even Pythagoras numbers. In Section 4 we study the bad set of higher order. To be precise, for a given polynomial, we study the common zero set of all possible denominators in all possible representations of $f$ as a sum of $2n$th powers of rational functions. We show that if $n>1$ then the bad set can be of codimension 2, contrary to quadratic case. Last section contains some open problems.

\section{Sums of even powers on singular curves}

In this section we will be interested in sums of even powers in the coordinate rings of singular planar curves. Our motivation is the \cite[Note added in proof]{cldr1982}. It is said there that the rings of the form $\R[t^a,t^{2a-1}]$ have unbounded Pythagoras numbers. The proof of this fact does not seem to be published. Main result of this section is generalization of the aforementioned observation to higher even powers. We start with some definitions.

Let $X\subset \R^d$ be an algebraic subset and let $\mathcal{I}(X)$ be the ideal of all polynomials vanishing on $X$.
\begin{df}
    We define the coordinate ring of $X$, $\R[X]$, as the quotient
    $$ \R[x_1,x_2\dots, x_d] / \mathcal{I}(X)$$
\end{df}
We will be mostly interested in coordinate rings of curves. Let $m<M$ be two coprime positive integers. Define $X_{m,M}:=\{x^M=y^m \}$. It is an irreducible, singular curve, and so its coordinate ring is an  integral domain. We will study sums of even powers in the ring $\R[X_{m,M}]$.

To state results about sums of squares more easily, let us define the notion of length.

\begin{df}
    Let $R$ be a commutative ring with identity, $a \in R$ and $n>0$ be a positive integer. We define the $2n$-length of $a$, $\ell_{2n,R}(a)$ as the smallest number of $2n$th powers needed to represent $a$. If such number does not exist, we define $\ell_{2n,R}(a)=\infty$. Since element $a$ can be usually considered in different rings, we put the $R$ to stress out in which ring length is considered.
\end{df}

The main result of this section is the following theorem.

\begin{thm} \label{THM: podchodzenie krzywymi do płaszczyzny dla p_2n}
    Let     $  {
   \big\{
    (m_i,M_i)
    \big\}
    }_{i \in \N}    $
     be a sequence of coprime tuples of positive integers such that for every $i$, $m_i < M_i$ and  $\lim_{i \rightarrow \infty} m_i = \infty$. Then      $$
    \lim_{i \rightarrow \infty} p_{2n}
        \big(
        \R [X_{m_i,M_i}]
        \big)
    = \infty
    $$
\end{thm}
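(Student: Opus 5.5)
Parametrize: since $\gcd(m,M)=1$, the curve $X_{m,M}$ is the image of $t\mapsto (t^m,t^M)$. Hence $\R[X_{m,M}]\cong \R[t^m,t^M]\subset \R[t]$, the monomial (semigroup) subring spanned by $t^k$ with $k\in S=\langle m,M\rangle$. The key feature is that $S$ contains no exponent $k$ with $0<k<m$, and it contains every multiple of $m$ below $M$. So low-degree elements are essentially polynomials in $u=t^m$. Every element of $\R[t^m,t^M]$ is $c_0+\sum_{k\ge m, k\in S} c_k t^k$. My plan is to transfer a known unboundedness result for polynomials in two variables, namely $p_{2n}(\R[x,y])=\infty$ from \cite{cldr1982, kv2023}, via a degree-truncation argument.

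\textbf{Construction.} Given $g$, I would fix a form $F(x,y)$ (or polynomial) in $\R[x,y]$ which is a sum of $2n$th powers but has $\ell_{2n}(F)>g$, with $\deg F=d$. I would then try to embed $\R[x,y]$ approximately into $\R[t^m,t^M]$ for $m\gg d$. A natural choice is $x\mapsto t^{m}$, $y\mapsto t^{m+1}$. This fails, since $m+1\notin S$. Instead I would work in the "weight" picture: write elements as $\sum c_k t^k$ and look at the coefficients in a window of exponents. A cleaner route is to use the gaps. For $m$ large, the elements $a=t^m$ and $b=t^{M}$ behave like independent variables up to high degree only if $M$ is comparable to $m$, which is not given. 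So I would instead use $1$ and $t^m$, and exploit the constraint that no $t^k$ with $0<k<m$ appears.

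Concretely, take $f=\sum_{j=1}^{N} h_j^{2n}$, where $h_j=1+c_j t^{k_j}$ for suitable exponents $k_j\in S$. Alternatively, take $f=\sum_j (a_j+ b_j t^m)^{2n}$, viewed in $\R[t]$. Suppose $f=\sum_{i=1}^g q_i^{2n}$ in $\R[t^m,t^M]$. Write $q_i=\alpha_i+r_i$ with $r_i\in t^m\R[t]$. Comparing the coefficients of $t^k$ for $k<2m$ shows that only $\alpha_i$ and the $t^m$-coefficient $\beta_i$ of $q_i$ contribute. The terms $t^k$ with $m<k<2m$ in $r_i$ produce contributions at exponents $\ge m+1$ mixed with $\alpha_i^{2n-1}$; these must cancel. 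More carefully, I would pass to the homogenization. The coefficients of $t^0,t^m$ give
$$\sum_i \alpha_i^{2n}=\sum_j a_j^{2n},\qquad \sum_i \alpha_i^{2n-1}\beta_i=\sum_j a_j^{2n-1}b_j .$$
That alone is too weak. To capture full forms I would use exponents $0,m,2m,\dots,2nm$ below $M$ when $M>2nm$. In general, though, $M$ may be as small as $m+1$, so this needs care.

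\textbf{Main obstacle and fallback.} The hard step is making the problem genuinely two-dimensional using only $m\to\infty$, with $M$ arbitrary. My approach would use a valuation/leading-term argument. Take $f=\sum_{j=1}^N (1+\lambda_j t^{m})^{2n}$ with generic $\lambda_j$, so that $f\in\R[t^m]$. Then consider the first $m$ exponents $m,m+1,\dots,2m-1$; only $m$ and possibly $M$ lie in $S$ there. Each $q_i$ contributes monomials in $S$, and the exponents $t^{k}$ with $k$ in the gap region $(0,m)$ are absent. I would show that any representation with $g$ terms forces, after substituting $t\mapsto \varepsilon t$ and rescaling, a representation of $\sum_j (1+\lambda_j u)^{2n}$ as a sum of $g$ $2n$th powers of polynomials in $u$ of bounded degree, up to an error vanishing as $m\to\infty$. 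That does not by itself give unbounded length, since $p_{2n}(\R[u])$ is finite. So the fallback is to use the two "directions" $u=t^m$ and $v=t^M$ combined with the gap structure. This yields a bihomogeneous identity in two variables, i.e. a representation of a binary form $\sum_j(x+\lambda_j y)^{2n}$ of degree $2n$ in fewer than the number of terms it requires. The impossibility then comes from bounded Waring rank, which is insufficient. So ultimately I expect to need to reduce to the two-variable result $p_{2n}(\R[x,y])=\infty$. I would do this by showing that for $m$ large relative to the degree of a witness $F$, the map $\R[x,y]_{\le d}\to\R[t^m,t^M]$ given by $x\mapsto t^m$, $y\mapsto t^M$ satisfies the following: representations pulled back on the graded pieces with weight $am+bM$ and $a+b\le 2d$ are unique, since $m$ is large and these weights are distinct. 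This is the step I expect to be hardest.
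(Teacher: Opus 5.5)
\textbf{There is a genuine gap.} Your final plan points the right way: use the map $x\mapsto t^m$, $y\mapsto t^M$ and reduce to $p_{2n}(\R[x,y])=\infty$. But the step that carries the whole proof is left open, and the sketch you give for it does not work as stated. You need the following: if $F\in\R[x,y]$ with $\deg F=d<m$ and $\varphi(F)=\sum_{i=1}^g q_i^{2n}$ with $q_i\in\R[t^m,t^M]$, then $F$ is already a sum of $g$ $2n$th powers in $\R[x,y]$.

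Your justification only asserts uniqueness on the weight pieces $am+bM$ with $a+b\le 2d$. Nothing confines the summands $q_i$ of an arbitrary representation to that window. A priori you only know $2n\deg_t q_i\le\deg_t\varphi(F)\le dM$. So when $M\gg m$, the $q_i$ may contain monomials $t^{am}$ with $a$ up to about $dM/(2nm)$, far beyond $2d$, and middle terms may cancel among the different $q_i^{2n}$. Your earlier worry that $t^m,t^M$ behave like independent variables only when $M$ is comparable to $m$ is also unfounded. We have $\ker\varphi=(x^M-y^m)$, whose generator has degree $M>m$, and every semigroup element $k<mM$ has a unique expression $k=am+bM$, however large $M$ is. That worry is what sent you on the detours through $\sum_j(1+\lambda_jt^m)^{2n}$ and binary forms, which are dead ends and not needed.

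The missing argument, which is the paper's key lemma, is short.
\begin{itemize}
\item Real $2n$th powers in $\R[t]$ have positive leading coefficients, so no top-degree cancellation occurs. Hence $\deg_t q_i\le dM/(2n)<mM/(2n)$.
\item Define the $\R$-linear section $\psi\colon\R[t^m,t^M]\to\R[x,y]$ by $\psi(t^k)=x^{a_k}y^{b_k}$, where $k=a_km+b_kM$. Then $\varphi\circ\psi=\mathrm{id}$, and $\deg\psi(h)\le\deg_t(h)/m$ because $a_k+b_k\le k/m$.
\item Put $\mathfrak s=\sum_i\psi(q_i)^{2n}$. Since $\varphi$ is a ring homomorphism, $\varphi(\mathfrak s)=\sum_i q_i^{2n}=\varphi(F)$, so $F-\mathfrak s\in(x^M-y^m)$.
\item We have $\deg\mathfrak s\le 2n\max_i\deg_t(q_i)/m<M$ and $\deg F<m<M$. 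Every nonzero element of $(x^M-y^m)$ has degree at least $M$, so $F=\mathfrak s$. Hence $\ell_{2n,\R[x,y]}(F)=\ell_{2n,\R[X_{m,M}]}(\varphi(F))$.
\end{itemize}
The theorem then follows by choosing $F$ with $\ell_{2n,\R[x,y]}(F)\ge N$ and taking $i$ large enough that $m_i>\deg F$.
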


%What is both important and interesting is the fact that it gives us many families of curves sich that values of $p_{2n}$ of their associated rings converge to $p_{2n}\big(\R[x,y]\big) = \infty$.
%\textcolor{red}{BYCHU: dodac notke do ciagu $x^{2n-1} - y^n$ od ktorego się to zaczelo}

%Before we prove this theorem we have to prepare ourselves with two results.
Before we start the proof, we need some preliminary results.  Following fact was proven for $n=1$ in \cite[Theorem 4.1]{cldr1982} and for $n>1$ in \cite[Theorem 2.2]{kv2023}.
%\textcolor{red}{BYCHU: dodać cytowanie twierdzenia poniżej!!!}
\begin{thm} \label{THM: p_2n dla R^2}
    For any positive integer $n\geq1$ the equality $p_{2n}\big(\R[x,y]\big) = \infty$ holds.
\end{thm}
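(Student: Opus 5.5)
Since this result is quoted from \cite[Theorem 4.1]{cldr1982} for $n=1$ and \cite[Theorem 2.2]{kv2023} for $n>1$, what follows is only a sketch of how I would reprove it, not the paper's argument; I have not checked it in detail. The plan is to produce, for every $N$, an explicit polynomial $F_N\in\R[x,y]$ that is a sum of $N$ $2n$th powers but is not a sum of fewer than $N$ of them. The tool is Newton polytopes. If $F=\sum_{i=1}^{g} h_i^{2n}$, then the leading forms of the $h_i$ with respect to any weight cannot cancel, since an even power of a nonzero real polynomial is nonnegative and a sum of such cannot vanish. Hence $\operatorname{New}(F)=2n\cdot \operatorname{conv}\big(\bigcup_i \operatorname{New}(h_i)\big)$, and in particular every $h_i$ is supported in $\tfrac{1}{2n}\operatorname{New}(F)$.

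I would take $F_N=\sum_{j=1}^{N}\big(x^{a_j}y^{b_j}\big)^{2n}$, where the points $w_j=(a_j,b_j)$ lie on a strictly convex curve, for instance $w_j=(j,j^2)$, so that every $w_j$ is a vertex of $P=\operatorname{conv}\{w_1,\dots,w_N\}$.

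The argument would then run as follows.
\begin{enumerate}
\item Take a representation $F_N=\sum_{i=1}^{g}h_i^{2n}$. By the observation above, each $h_i$ is supported in $P$.
\item For each vertex $w_j$ choose a weight vector $\omega_j$ that is maximized on $P$ exactly at $w_j$. The $\omega_j$-leading part of $F_N$ is the monomial $x^{2na_j}y^{2nb_j}$. Comparing leading parts gives $\sum_i c_{i,j}^{2n}=1$, where $c_{i,j}$ is the coefficient of $x^{a_j}y^{b_j}$ in $h_i$. So the vectors $c_{\bullet,j}=(c_{1,j},\dots,c_{g,j})\in\R^g$ are all nonzero.
\item Show that the $N$ vectors $c_{\bullet,1},\dots,c_{\bullet,N}$ are linearly independent, which forces $g\ge N$.
\end{enumerate}

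For step 3 I would compare the coefficients of $F_N$ at the monomials $x^{(2n-1)w_j+w_k}$ with $j\ne k$, and more generally at the mixed monomials lying along the edges of $2nP$. For consecutive vertices $w_j,w_{j+1}$ the edge $[w_j,w_{j+1}]$ contains no other lattice points. Therefore the coefficient of $F_N$ at $(2n-1)w_j+w_{j+1}$ equals $2n\sum_i c_{i,j}^{2n-1}c_{i,j+1}$, and this must vanish. Doing this along every edge gives orthogonality-type relations between the vectors $c_{\bullet,j}^{\,2n-1}$ (coordinatewise powers) and $c_{\bullet,j+1}$. To pass to non-adjacent vertices I would induct on $N$. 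Since $w_N$ is a vertex, deleting it should let me restrict to the face structure of $\operatorname{conv}\{w_1,\dots,w_{N-1}\}$ after subtracting the $w_N$-contributions.

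I expect step 3 to be the main obstacle. The monomials interior to $2nP$ receive contributions from interior lattice points of $P$, and those points do occur for the parabola configuration. So the cross-term relations do not immediately give linear independence, and for $n>1$ one only gets relations between $(2n-1)$st powers and first powers, not a Gram-matrix rank statement. If the direct approach fails, I would try to choose the $w_j$ more cleverly, with $P$ containing no lattice points besides the $w_j$ and its edges. Such polygons are necessarily small, so I might instead allow extra monomials in $F_N$. Alternatively, I would specialize $y$ to a suitable power of a new parameter and argue in a field of iterated Laurent series, where a rank count of leading coefficients becomes linear algebra over $\R$.
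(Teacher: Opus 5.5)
Your steps 1 and 2 are correct, but step 3 is the whole content of the theorem, and nothing in your sketch establishes it. The relations you extract along the edge $[w_j,w_{j+1}]$ amount to the binary identity $\sum_i (c_{i,j}s+c_{i,j+1}t)^{2n}=s^{2n}+t^{2n}$, which couples only adjacent vertices. All of these relations, together with $\sum_i c_{i,j}^{2n}=1$, hold if the vectors $c_{\bullet,j}$ simply alternate between $e_1$ and $e_2$. So by themselves they cannot push the rank of $(c_{\bullet,1},\dots,c_{\bullet,N})$ above $2$.

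Convex position does not supply the missing rigidity either. The identity $1+x^2+y^2+x^2y^2=(1+xy)^2+(x-y)^2$ writes a sum of squares of the four vertex monomials of the unit square as a sum of only $2$ squares, and it realizes exactly this alternating pattern. A proof that $F_N$ has length $N$ would therefore have to use the coefficient equations at interior lattice points of $2nP$. There, mixed products of non-adjacent vertex coefficients can be cancelled by interior coefficients of the $h_i$, and you give no argument controlling this. It is not even clear that the lower bound you claim holds for your configuration.

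Your proposed induction does not help: removing $w_N$ and ``subtracting its contributions'' does not leave an identity between sums of $2n$th powers. The Laurent-series fallback cannot work as stated. In $\R((t))$ and its iterates every sum of $2n$th powers is a single $2n$th power, and a ring homomorphism can only lower lengths. So no specialization of that kind gives unbounded lower bounds.

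What is missing is a mechanism that removes one summand at a time. The paper does not reprove this statement; it quotes it from \cite{cldr1982} and \cite{kv2023}. The standard argument is the one the paper adapts in Section 2:
\begin{enumerate}
\item Set $F_1=1$ and $F_{k+1}=1+(y-x^{r_k})^{2n}F_k$, with $r_k>\sum_{j<k}r_j$.
\item Suppose $F_{L+1}=\sum_{i=1}^{L}h_i^{2n}$ and put $\gamma=y-x^{r_L}$. Substituting $y=x^{r_L}$ gives $1=\sum_i h_i(x,x^{r_L})^{2n}$ in $\R[x]$, so $h_i=c_i+\gamma q_i$ with $c_i\in\R$.
\item A degree count gives $\deg q_i\le\sum_{j<L}r_j<r_L=\deg\gamma$.
\item Comparing the $\gamma$- and $\gamma^2$-terms then forces $q_i=0$ whenever $c_i\neq 0$. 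For $n=1$ this requires first an orthogonal change of the $h_i$.
\item Hence $F_L$ is a sum of at most $L-1$ $2n$th powers. Iterating ends with $F_1=1$ written as an empty sum, which is a contradiction.
\end{enumerate}
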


Let us prove a crucial lemma.
\begin{lemma} \label{LEM: zachowywanie długości przy przejściu z R^2 na R[X_m,M]}
Let $m<M$ be a pair of coprime positive integers and $X_{m,M}$ be the corresponding curve.  Let  $$\varphi: \R[x,y] \twoheadrightarrow \R[X_{m,M}]$$ be the natural projection. If the degree of $f$ is strictly smaller than $m$, then
    $$
    \ell_{2n, \R[x,y]} (f) = \ell_{2n,\R[X_{m,M}]}\big(\varphi(f)\big).
    $$
   
\end{lemma}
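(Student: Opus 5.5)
The inequality $\ell_{2n,\R[X_{m,M}]}(\varphi(f))\le \ell_{2n,\R[x,y]}(f)$ is immediate, since $\varphi$ is a ring homomorphism and sends a representation of $f$ as a sum of $2n$th powers to one of $\varphi(f)$. The work is in the reverse inequality. Given $\varphi(f)=\sum_{i=1}^k g_i^{2n}$ in $\R[X_{m,M}]$, I will construct lifts $G_i\in\R[x,y]$ with $f=\sum G_i^{2n}$ holding already in $\R[x,y]$.

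\emph{Step 1: parametrize the curve.} Since $\gcd(m,M)=1$, the map $x\mapsto t^m,\ y\mapsto t^M$ identifies $\R[X_{m,M}]$ with the subring $\R[t^m,t^M]\subset\R[t]$. Its kernel is $\mathcal I(X_{m,M})=(x^M-y^m)$, which is what makes the identification an isomorphism. Give $x$ weight $m$ and $y$ weight $M$, so that the monomial $x^ay^b$ maps to $t^{am+bM}$.

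\emph{Step 2: uniqueness of representations below weight $mM$.} If $am+bM=a'm+b'M<mM$ with $a,b,a',b'\ge 0$, then $m\mid (b-b')M$, so $m\mid b-b'$. Since $bM<mM$ forces $b<m$, and likewise $b'<m$, we get $b=b'$ and hence $a=a'$. Consequently the substitution map, restricted to polynomials supported on monomials of weight $<mM$, is injective into $\R[t]$. Moreover, every element of $\R[t^m,t^M]$ of $t$-degree $<mM$ has a unique preimage of this form.

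\emph{Step 3: degree bound, the key point.} Because $\deg f=d<m$, every monomial $x^ay^b$ of $f$ has weight $am+bM\le (a+b)M\le dM<mM$. Hence $\deg_t f(t^m,t^M)\le dM$. In $\R[t]$, let $e=\max_i \deg_t g_i$. The coefficient of $t^{2ne}$ in $\sum g_i^{2n}$ is a sum of $2n$th powers of real numbers, not all zero, so it is positive. Therefore $2ne=\deg_t f(t^m,t^M)\le dM$, and so each $g_i$ has $t$-degree at most $dM/(2n)<mM$.

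By Step 2 each $g_i$ then has a unique preimage $G_i$ supported on monomials of weight $\le dM/(2n)$. Hence $G_i^{2n}$ and $\sum G_i^{2n}$ are supported on weights $\le dM<mM$, and so is $f$. Both $f$ and $\sum G_i^{2n}$ map to the same element of $\R[t]$, so by the injectivity in Step 2 they are equal in $\R[x,y]$. This gives $\ell_{2n,\R[x,y]}(f)\le k$.

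The main obstacle is Step 3 combined with Step 2: we must control the degrees of the $g_i$ so that no cancellation through the relation $x^M=y^m$ can occur. The realness argument on top coefficients handles this, and the hypothesis $\deg f<m$ enters exactly to guarantee $dM<mM$.
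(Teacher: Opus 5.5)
Your proof is correct, and its skeleton is the paper's: the easy inequality from surjectivity of $\varphi$, the bound on $\deg_t g_i$ from positivity of the top coefficient of $\sum g_i^{2n}$, and a monomial-by-monomial lift of the $g_i$. In fact your $G_i$ coincide with the paper's $\psi(g_i)$, where $\psi(t^k)=x^{a_k}y^{b_k}$ with $k=a_km+b_kM$, because by your Step 2 this decomposition is unique for $k<mM$.

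The real difference is in how you verify $f=\sum G_i^{2n}$. The paper stays with ordinary total degree. It proves $\deg\psi(h)\le \deg_t(h)/m$ and deduces that both $f$ and $\sum\psi(g_i)^{2n}$ have total degree $<M$. It then concludes because $\ker\widetilde{\varphi}=(x^M-y^m)$ has no nonzero elements of degree $<M$. You instead use the $(m,M)$-weighted grading and the injectivity of $x^ay^b\mapsto t^{am+bM}$ on the span of monomials of weight $<mM$. That injectivity is an elementary consequence of $\gcd(m,M)=1$.

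Your route never needs the explicit generator $x^M-y^m$ of the kernel, which the paper only asserts as a straightforward calculation. You use only $\ker\widetilde{\varphi}=\mathcal{I}(X_{m,M})$, to set up the identification in Step 1. Your argument also works verbatim for any $f$ all of whose monomials have weighted degree $<mM$, a slightly weaker hypothesis than $\deg f<m$. The paper's route, in exchange, needs no uniqueness of the decompositions $k=a_km+b_kM$: any choice of $(a_k,b_k)$ gives the degree bound for $\psi$.
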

\begin{proof}
    Since $\varphi$ is a surjection, inequality $\ell_{2n, \R[x,y]} (f) \geq \ell_{2n,\R[X_{m,M}]}\big(\varphi(f)\big)$ holds. To finish the proof it is therefore enough to show that opposite inequality holds. We will restrict ourselves to the nontrivial case, that is $\ell_{2n,\R[X_{m,M}]}\big(\varphi(f)\big) < \infty$.

First of all, the coordinate ring of $X_{m,M}$ is easily seen to be isomorphic to the ring $\R[t^m,t^M]$. To stress out that we work in this particular ring, rather than in $\R[X_{m,M}]$, we denote by $\widetilde{\varphi}$ the epimorphism $\widetilde{\varphi}: \R[x,y] \rightarrow \R[t^m,t^M]$, and so, we will consider $\widetilde{\varphi}(f)$ instead of $\varphi(f)$.

% https://q.uiver.app/#q=WzAsMyxbMCwwLCJcXFJbeCx5XSJdLFsxLDAsIlxcUlt0Xm0sdF5NXSJdLFsxLDEsIlxcUltYX3ttLE19XSJdLFswLDEsIlxcd2lkZXRpbGRle1xcdmFycGhpfSJdLFswLDIsIlxcdmFycGhpIiwyXSxbMiwxLCJcXGlvdGEiLDJdXQ==
Let $f \in \R[x,y]$ be such that $\deg f <m$. Since $\deg\big(\widetilde{\varphi}(x)\big), \deg\big(\widetilde{\varphi}(y)\big) \leq M$, we obtain the bound 
    $$
    \deg\big(
    \widetilde{\varphi}(f)
    \big) \leq M \deg(f) < m\cdot M.
    $$
    Let us denote $\ell := \ell_{2n,\R[t^m,t^M]}
    \big(
    \widetilde{\varphi}(f)
    \big)$ and let $g_1,\ldots,g_\ell \in \R[t^m,t^M]$ be such that
    $$
     \widetilde{\varphi}(f) = \sum_{i=1}^\ell g_i^{2n}.
    $$
    We can immediately observe that for any $i =1,2,\dots,\ell$ inequality $2n \cdot \deg(g_i) \leq \deg\big(
    \widetilde{\varphi}(f)
    \big) < m M$ holds, hence $\deg(g_i) < \frac{mM}{2n}$. 
    
    To finish the proof we will define a $\R$-linear map $\psi: \R[t^m,t^M] \rightarrow \R[x,y]$ which satisfies $\widetilde{\varphi} \circ \psi = \mathrm{id}_{\R[t^m,t^M]}$. If $t^k \in \R[t^m,t^M]$, then there exists $a_k,b_k \in \Z_{\geq 0}$ satisfying $k = a_k m + b_k M$. Let us therefore define $\psi(t^k) := x^{a_k} y^{b_k}$ and extend linearly. By construction,  $$\widetilde{\varphi} \circ \psi(t^k) = t^{a_k m + b_k M} = t^k$$ hence indeed equality $\widetilde{\varphi} \circ \psi = \mathrm{id}_{\R[t^m,t^M]}$ holds. Additionally, we have the following chain of inequalities
    $$
    \deg \psi(t^k) = a_k + b_k = \frac{1}{m} \big( a_k m + b_k m\big) \leq \frac{1}{m} \big(a_km +  b_kM\big) = \frac{k}{m} = \frac{1}{m} \cdot \deg(t^k).
    $$
    As a consequence, the inequality $\deg \psi(h) \leq \frac{1}{m} \cdot \deg(h)$ holds for any $h \in \R[t^m,t^M]$.

    Now let $\mathfrak{s} := \sum_{i=1}^{\ell} {\psi(g_i)}^{2n}$ and observe that the following inequalities holds
    $$
    \deg(\mathfrak{s}) \leq \max_{i =1,\dots,\ell} \Big\{ 
    2n \cdot \deg \psi(g_i) 
    \Big\} 
    \leq
    \max_{i =1,\dots,\ell} \bigg\{ 
    \frac{2n}{m} \cdot \deg(g_i) 
    \bigg\}
    <
    \max_{i =1,\dots,\ell} \bigg\{ 
    \frac{2n}{m} \cdot \frac{mM}{2n} 
    \bigg\} = M.
    $$
    Our aim now, is to show that $\mathfrak{s} = f$. To accomplish this, let $\varepsilon := f -\mathfrak{s}$. We have
    \begin{multline*}
    \widetilde{\varphi} (\varepsilon) = \widetilde{\varphi}(f) - \widetilde{\varphi}(\mathfrak{s}) = 
    \sum_{i=1}^\ell g_i^{2n} - 
    \widetilde{\varphi}
    \Bigg(
    \sum_{i=1}^{\ell} {\psi(g_i)}^{2n}
    \Bigg)
    =
    \\
    =
    \sum_{i=1}^\ell g_i^{2n} - 
    \sum_{i=1}^{\ell} 
    {
    \Big(
    (\widetilde{\varphi} \circ \psi)(g_i)
    \Big)}^{2n} = 
    \sum_{i=1}^\ell g_i^{2n} - \sum_{i=1}^\ell g_i^{2n} = 0,
    \end{multline*}
    hence $\varepsilon \in \ker (\widetilde{\varphi})$. However, it is a straightforward calculation that $\ker (\widetilde{\varphi}) = (x^M - y^m)$, hence $\deg(\varepsilon) \geq \deg(x^M - y^m) = M$ or $\varepsilon = 0$. Since $\deg(f) < M$ and $\deg(\mathfrak{s}) < M$ we obtain $\deg(\varepsilon) \leq \max \big\{ \deg(f), \deg(\mathfrak{s}) \big\} < M$ which implies $\varepsilon = 0$. This finishes the proof, since we obtain
    $$
    f = \mathfrak{s} = \sum_{i=1}^{\ell} {\psi(g_i)}^{2n},
    $$
    which shows that $\ell_{2n,\R[x,y]}(f) \leq \ell = \ell_{2n,\R[X_{m,M}]}
    \big(
    \widetilde{\varphi}(f)
    \big)$ as desired.
\end{proof}

\noindent
\textit{Proof of Theorem \ref{THM: podchodzenie krzywymi do płaszczyzny dla p_2n}}
%\textcolor{red}{BYCHU: napisać dowód}
It is enough to show that 
$
    \liminf_{i \rightarrow \infty} p_{2n}
        \big(
        \R [X_{m_i,M_i}]
        \big)
    \geq k
$
for all positive integers $k$. Let $k$ be an arbitrary positive integer. Since $p_{2n}\big(\R[x,y]\big) = \infty$, there exists a polynomial $f \in \R[x,y]$ such that $\ell_{2n, \R[x,y]}(f) \geq k$. Let us put $d := \deg(f)$. Since $\lim_{i \rightarrow \infty} m_i = \infty$, there exists $I \in \N$ such that $m_i > d$ for all $i \geq I$. Therefore for any $i \geq I$ we can now apply Lemma \ref{LEM: zachowywanie długości przy przejściu z R^2 na R[X_m,M]} to obtain 
$$
   k \leq \ell_{2n, \R[x,y]} (f) = \ell_{2n,\R[X_{m_i,M_i}]}\big(\varphi_{(m_i,M_i)}(f)\big),
$$
where $\varphi_{(m_i,M_i)}$ denotes the natural projection from $\R[x,y]$ onto $\R[X_{m_i,M_i}]$.

Consequently,  $p_{2n}\big(\R[X_{m_i,M_i}]\big) \geq k$ for all $i \geq I$, hence inequality
$$
    \liminf_{i \rightarrow \infty} p_{2n}
        \big(
        \R [X_{m_i,M_i}]
        \big)
    \geq k
$$
indeed holds for arbitrary $k \in \N$ as desired.
\hfill \qed

\begin{remark}
    Note that if $n=1$ then we know that $p_2(\R[X_{m,M}]) < \infty$ (see \cite[Theorem 2.7]{cldr1982}). However, if $n>1$ then it is not known if those number are finite. It is not even known if $p_{2n}(\R[x])$ is finite or not (cf. \cite[Question 5.1]{kv2023}).
\end{remark}

\begin{remark}
Combining Theorem \ref{THM: podchodzenie krzywymi do płaszczyzny dla p_2n} with the results from \cite{scheiderer2017} we obtain the following ($m<M$) bounds 
$$ \frac{m}{2} \leq p_2(\R[X_{m,M}]) \leq 2m $$
(see \cite[Theorem 3.6]{scheiderer2017} and  \cite[Theorem 3.1]{cldr1982}).

\end{remark}

\section{Sums of squares on surfaces}
In this section we record some generalizations of the results in \cite{bk2024} to rings obtained by adjoining square roots of several polynomials. Those rings correspond to algebraic surfaces in $\R^N$. In \cite{bk2024} only coordinate rings of surfaces in $\R^3$ were considered.

\begin{df}
    Given a finite sequence $f_1,\dots, f_n\in\R[x,y]$ and a subset $S\subseteq\{1,\dots, n\}=[n]$ we will write
    \[
    f_S = \prod_{i\in S} f_i.
    \]
    We will call such a sequence square-independent if none of the non-empty products $f_S$ is a square in $\R[x,y]$.
\end{df}

\begin{df}
    Let $f\in\R[x,y]$ be a polynomial and $d = \deg_y f$. Let $\alpha x^b y^d$ be a monomial with the largest possible $b$ and nonzero coefficient $\alpha$ among all such monomials in $f(x,y)$. Then we will call $\alpha$ the $y$-leading coefficient of $f$.
\end{df}

\begin{thm}\label{positive-leading}
    Let $f_1,\dots, f_n\in \R[x,y]$ be square-independent polynomials. Assume that they have a common zero and their $y$-leading coefficients are positive. Then
    \[
    p_2\brac{\R[x,y,\sqrt{f_1},\dots, \sqrt{f_n}]} = \infty.
    \]
\end{thm}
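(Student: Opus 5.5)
We will reduce the statement to the known case $p_2(\R[x,y])=\infty$ (Theorem \ref{THM: p_2n dla R^2} with $n=1$) by a degree argument, in the spirit of Lemma \ref{LEM: zachowywanie długości przy przejściu z R^2 na R[X_m,M]} and of \cite{bk2024}.

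\emph{Step 1: free module structure.} Put $A=\R[x,y,\sqrt{f_1},\dots,\sqrt{f_n}]$ and write $\sqrt{f_S}=\prod_{i\in S}\sqrt{f_i}$. Square-independence gives, by a Kummer-type argument over the field $\R(x,y)$, that $[\R(x,y)(\sqrt{f_1},\dots,\sqrt{f_n}):\R(x,y)]=2^n$. Hence every $g\in A$ can be written uniquely as $g=\sum_{S\subseteq[n]} g_S\sqrt{f_S}$ with $g_S\in\R[x,y]$.

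\emph{Step 2: squaring.} For $g$ as above,
\[
g^2=\sum_{S} g_S^2 f_S+\sum_{S\neq T} g_Sg_T\sqrt{f_S}\sqrt{f_T},
\]
and $\sqrt{f_S}\sqrt{f_T}=f_{S\cap T}\sqrt{f_{S\triangle T}}$. So the component of $g^2$ along $\sqrt{f_\emptyset}=1$ is $\sum_S g_S^2 f_S$. Consequently, if $p\in\R[x,y]$ satisfies $p=\sum_{j=1}^\ell h_j^2$ in $A$, then
\[
p=\sum_j\sum_S h_{j,S}^2 f_S .
\]
This is a representation of $p$ as a positive combination, with weights $f_S$, of squares of polynomials.

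\emph{Step 3: choosing $p$.} Move the common zero to the origin. Using Theorem \ref{THM: p_2n dla R^2}, or more precisely the construction in \cite{cldr1982}, take polynomials $p$ of bounded degree that are sums of squares in $\R[x,y]$ but need at least $k$ squares. We want to show that the terms with $S\neq\emptyset$ are forced to vanish, or at least cannot help, so that $p=\sum_j h_{j,\emptyset}^2$ and therefore $\ell_{2,A}(p)\ge k$. Two devices do this.

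First, the positivity of the $y$-leading coefficients. Order monomials by $y$-degree and then by $x$-degree. Each summand $h_{j,S}^2f_S$ then has a leading term with positive coefficient, so there is no cancellation of top terms. This bounds $\deg h_{j,S}$ in terms of $\deg p-\deg f_S$, exactly as in the degree bound of Lemma \ref{LEM: zachowywanie długości przy przejściu z R^2 na R[X_m,M]}.

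Second, the common zero. Every $f_S$ with $S\neq\emptyset$ vanishes at the origin. If $p$ is chosen so that the obstruction to short representations lives at high degree, which is the case for the \cite{cldr1982} examples since they are built from homogeneous forms such as $x^{2d}+y^{2d}+\dots$, then these constraints force the $S\neq\emptyset$ contributions to be negligible.

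The cleanest route is to homogenize and pass to top-degree forms. The top forms of the $f_S$-terms are $(\mathrm{lead}\,h)^2\cdot\mathrm{lead}(f_S)$, which need not be squares. So we instead choose $p$ homogeneous of degree $2d$ with $d$ small compared to the structure of the $f_S$, so that the $S\neq\emptyset$ terms must cancel among themselves.

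\emph{Main obstacle.} The hard part is this last elimination: showing that the terms $h_{j,S}^2f_S$ with $S\neq\emptyset$ cannot reduce the number of squares needed. The plan is to follow \cite{bk2024}, which treats $n=1$. There, positivity of the $y$-leading coefficient prevents top-term cancellation in $y$, and the common zero gives vanishing at a point, so that restricting to suitable lines or weighted degrees reduces the problem to Cassels--Ellison--Pfister type examples. For general $n$ we would run their argument on each $f_S$ separately. This works because $f_S$ inherits both hypotheses: its $y$-leading coefficient is the product of positive numbers, and it vanishes at the common zero.
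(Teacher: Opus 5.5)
Your Steps 1 and 2 are correct and match the paper's setup: the $1$-component of $\sum_\nu h_\nu^2$ is $\sum_\nu\sum_S f_S a_{\nu,S}^2$. Step 3, however, is the whole content of the theorem, and you do not carry it out. You label it the main obstacle and defer to the $n=1$ case, and the concrete devices you suggest for it cannot work.

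\begin{itemize}
\item \emph{Small degree.} If $d$ is to stay small relative to the fixed $f_S$, the degree is bounded. A sum of squares of degree $2d$ in $\R[x,y]$ is a sum of at most $\binom{d+2}{2}$ squares (Gram matrix argument), so such $p$ cannot need $k$ squares for large $k$.
\item \emph{Homogeneous $p$.} A binary form that is a sum of squares is already a sum of two squares, so choosing $p$ homogeneous gives nothing.
\item \emph{Discarding the weighted terms.} You cannot show that the terms $f_S h_{j,S}^2$ with $S\neq\varnothing$ vanish or ``cannot help'' for an arbitrary $p$ of large length. Take $f_1=x^2+y^2$, which satisfies all the hypotheses. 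Then $f_1g^2=(\sqrt{f_1}\,g)^2$ is a single square in the extension but not a square in $\R[x,y]$.
\end{itemize}

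So the test polynomials must be tailored to the argument, and the weighted terms must be handled by a different mechanism.

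That mechanism is what is missing. The paper uses the explicit family $F_1=1$, $F_{m+1}=F_m(y-x^{r_m})^2+1$, with $r_1>1+\max_i\deg f_i$ and $r_k>r_1+\dots+r_{k-1}$. The argument runs as follows.

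\begin{itemize}
\item Assume $F_{L+1}=\sum_{\nu=1}^L h_\nu^2$ and substitute $y=x^{r_L}$. Since $r_L>\deg f_i$, the leading coefficient of $f_S(x,x^{r_L})$ is the product of the $y$-leading coefficients, hence positive. This is where your remark that every $f_S$ inherits the hypotheses is used.
\item The identity $1=\sum_\nu a_{\nu,\varnothing}(x,x^{r_L})^2+\sum_\nu\sum_{S\neq\varnothing}f_S(x,x^{r_L})\,a_{\nu,S}(x,x^{r_L})^2$ then forces $a_{\nu,S}$ to vanish on the curve for $S\neq\varnothing$. It also forces $a_{\nu,\varnothing}$ to be constants $c_\nu$ there, with $\sum c_\nu^2=1$.
\item An orthogonal change of the $h_\nu$ makes $(c_\nu)=(0,\dots,0,1)$. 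Then divide by $(y-x^{r_L})^2$.
\item The weighted degree $\deg_D(x^ay^b)=a+Db$ kills the cross term: it would give degree at least $2r_L>\deg_D F_L$. This writes $F_L$ as a sum of $L-1$ plain squares plus weighted terms.
\item Iterating down to $F_1$ gives $1=\sum_\lambda f_{S_\lambda}q_\lambda^2$ with all $S_\lambda\neq\varnothing$. This is impossible at the common zero.
\end{itemize}

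The weighted terms are never shown to vanish globally, only on each curve $y=x^{r_L}$, which lets one factor out $y-x^{r_L}$. They are carried along, and the common zero disposes of them only at the very end. Also, all the $S\neq\varnothing$ terms sit in one identity, so ``running the $n=1$ argument on each $f_S$ separately'' is not meaningful. One runs it once, treating all of them together.
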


\begin{proof}
    Consider a sequence of positive integers $(r_m)_{m\geq1}$ such that
    \[
    r_1>1+\max_i \deg f_i\quad \text{and} \quad r_k>\sum_{i=1}^{k-1} r_i. 
    \]
    We now define a sequence of polynomials $(F_m)_{m\geq 1}$ by letting $F_1=1$ and
    \[
    F_{n+1} = F_n\brac{y-x^{r_n}}^2+1.
    \]

    Assume to the contrary that the 2-Pythagoras number of $\R[x,y,\sqrt{f_1},\dots, \sqrt{f_n}]$ is finite and denote its value by $L$. Hence we have
    \[
    F_{L+1} = \sum_{i=1}^L h_i^2,
    \]
    for some $h_i\in \R[x,y,\sqrt{f_1},\dots, \sqrt{f_n}]$. Write
    \[
    h_i=\sum_{S\subseteq[n]}a_{i,S}\sqrt{f_S},
    \]
where $f_\varnothing = 1$.
    
    This yields
    \[
    F_{L+1} = \sum_{\nu=1}^L \sum_{S\subseteq[n]}f_Sa_{\nu,S}^2 = \sum_{\nu=1}^L a_{\nu,\varnothing}^2
        +
        \sum_{\nu=1}^L
        \sum_{\varnothing\neq S\subseteq[n]}
        f_Sa_{\nu,S}^2 .
    \]
    Substituting $y=x^{r_L}$ gives
    \[
     1 =\sum_{\nu=1}^L a_{\nu,\varnothing}(x,x^{r_L})^2 + \sum_{\nu=1}^L \sum_{\varnothing\neq S\subseteq[n]} f_S(x,x^{r_L}) a_{\nu,S}(x,x^{r_L})^2 .
    \]
    Since every nonzero term on the right hand side has a positive leading coefficient we obtain
    \[
        a_{\nu,S}(x,x^{r_L})=0 \qquad(\varnothing\neq S\subseteq[n]),
    \]
    and
    \[
            a_{\nu,\varnothing}(x,x^{r_L})=c_\nu\in\mathbb R,\qquad \sum_{\nu=1}^L c_\nu^2 =1 .
    \]
    After an orthogonal transformation (see \cite[Theorem 8.1.2]{pd2001}) we may thus assume
    \[
        a_{L,\varnothing}=1+(y-x^{r_L}) b_{L,\varnothing}
    \]
    and
    \[
        a_{\nu,\varnothing}=(y-x^{r_L}) b_{\nu,\varnothing} \qquad(1\leq \nu\leq L-1).
    \]
    Substituting these expressions into our equation, we get
    \[
    \begin{aligned}
            F_L(y-x^{r_L})^2+1
            &=
            1 + 2(y-x^{r_L}) b_{L,\varnothing} + (y-x^{r_L})^2 b_{L,\varnothing}^2     \\
            & \quad
            +(y-x^{r_L})^2\sum_{\nu=1}^{L-1} b_{\nu,\varnothing}^2+(y-x^{r_L})^2\sum_{\nu=1}^L \sum_{\varnothing\neq S\subseteq[n]} f_Sb_{\nu,S}^2 .
    \end{aligned}
    \]
    After cancelling the $1$'s, we see that $(y-x^{r_L})$ divides $b_{L, \varnothing}$, so we can write $b_{L, \varnothing} = t(y-x^{r_L})$. Let $D = 1+\max\deg f_i$. Consider the degree defined on monomials as
    \[
    \deg_D(x^ay^b) = a+Db,
    \]
    and break ties by exponent of $x$. Further extend it onto all polynomials in $\R[x,y]$ in the natural way. Since $r_m>D$, we see that \allowbreak\mbox{${\deg_D(y-x^{r_m}) = r_m}$}. On the other hand $D>\max\deg f_i$, so the coefficient next to the highest $\deg_D$ term of $f_i$ is its $y$-leading coefficient. Cancelling $(y-x^{r_L})^2$ from our equation we obtain
    \begin{equation}
        F_L = 2t + \brac{y-x^{r_L}}^2t^2 + \sum_{\nu=1}^{L-1} b_{\nu,\varnothing}^2
        + \sum_{\nu=1}^L \sum_{\varnothing\neq S\subseteq[n]} f_Sb_{\nu,S}^2 .
        \label{deg-comparison}
    \end{equation}
    Every nonzero square has positive $\deg_D$ leading coefficient, and by our previous observation every nonzero term $f_Sb_{\nu,S}^2$ also has positive $\deg_D$ leading coefficient. If $t\neq0$ then we see that $\deg_D$ of the right hand side is at least $2r_L$. On the other hand the left hand side has $\deg_D$ smaller than that by construction. Thus $t=0$.

    Equation \eqref{deg-comparison} now becomes
    \[
        F_L = \sum_{\nu=1}^{L-1} b_{\nu,\varnothing}^2
        + \sum_{\nu=1}^L \sum_{\varnothing\neq S\subseteq[n]} f_Sb_{\nu,S}^2 .
    \]
    We have reduced the number of unweighted square terms by one. Repeating the same argument with $F_L,\dots,F_2$ we obtain an identity
    \[
    F_1 = \sum_\lambda f_{S_\lambda}q_\lambda^2,
    \]
    However $F_1=1$, while evaluating at the common zero of $f_i$ the right hand side vanishes. This finishes the proof.
\end{proof}

\begin{cor}
    Let $f_1,\dots, f_n\in\R[x,y]$ be square-independent polynomials. Assume that they have a common zero and write $f_i = h_i+g_i$, where $h_i$ is the homogeneous part of $f_i$ of degree $\deg f_i$. Suppose there exists $v\in\R^2\setminus\{0\}$ such that $h_i(v)>0$ for every $i$. Then
    \[
    p_2\brac{\R[x,y,\sqrt{f_1},\dots,\sqrt{f_n}]} = \infty.
    \]
\end{cor}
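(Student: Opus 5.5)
The plan is to reduce to Theorem \ref{positive-leading} by a linear change of coordinates on $\R^2$. Choose an invertible linear map $A\colon\R^2\to\R^2$ sending the vector $v$ to $(0,1)$, i.e.\ new coordinates $(x',y')$ with $(x,y)=A^{-1}(x',y')$ and $A^{-1}(0,1)=v$. This induces an $\R$-algebra automorphism of $\R[x,y]$, $f\mapsto \tilde f := f\circ A^{-1}$. It extends to an isomorphism
\[
\R[x,y,\sqrt{f_1},\dots,\sqrt{f_n}]\cong \R[x',y',\sqrt{\tilde f_1},\dots,\sqrt{\tilde f_n}],
\]
because it maps each $f_i$ to $\tilde f_i$ and each relation $(\sqrt{f_i})^2=f_i$ to the corresponding one. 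Hence the $2$-Pythagoras numbers coincide, and it suffices to check the hypotheses of Theorem \ref{positive-leading} for $\tilde f_1,\dots,\tilde f_n$.

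Square-independence and the existence of a common zero are preserved. An automorphism maps squares to squares and commutes with taking the products $f_S$. A common zero $p$ of the $f_i$ gives the common zero $Ap$ of the $\tilde f_i$.

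The remaining point, and the only one needing care, is positivity of the $y'$-leading coefficients. Since $A$ is linear, $\tilde f_i=\tilde h_i+\tilde g_i$ with $\tilde h_i=h_i\circ A^{-1}$ homogeneous of degree $d_i=\deg f_i$ and $\deg\tilde g_i<d_i$. The coefficient of $y'^{d_i}$ in $\tilde h_i$ is $\tilde h_i(0,1)=h_i(v)>0$. Any monomial $x'^b y'^{c}$ of $\tilde f_i$ has $b+c\le d_i$, so $\deg_{y'}\tilde f_i=d_i$. The only monomial with $y'$-exponent $d_i$ is then $y'^{d_i}$ itself (so $b=0$), and its coefficient comes from $\tilde h_i$ alone, since $\tilde g_i$ has total degree less than $d_i$. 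Thus the $y'$-leading coefficient of $\tilde f_i$ equals $h_i(v)>0$.

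All hypotheses of Theorem \ref{positive-leading} now hold for $\tilde f_1,\dots,\tilde f_n$, so $p_2$ of the transformed ring is infinite, and therefore so is $p_2\brac{\R[x,y,\sqrt{f_1},\dots,\sqrt{f_n}]}$. The main (mild) obstacle is this leading-coefficient bookkeeping, together with making sure the ring isomorphism is stated correctly. The latter is handled by viewing both rings as $\R[x,y][z_1,\dots,z_n]/(z_i^2-f_i)$, or as subrings of a common extension on which the automorphism acts.
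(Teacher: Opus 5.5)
Your proposal is correct and follows the paper's proof essentially verbatim. Your $A^{-1}$ plays the role of the paper's $M$ with $M(0,1)=v$, and the key observation is the same: the top homogeneous part $h_i\circ A^{-1}$ has $y'^{d_i}$-coefficient $h_i(v)>0$, so Theorem~\ref{positive-leading} applies. You are slightly more thorough than the paper, since you also check that square-independence and the common zero are preserved, and that this coefficient really is the $y'$-leading coefficient in the paper's sense.
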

\begin{proof}
     Choose $M\in\mathrm{GL}_2(\mathbb R)$ such that $M(0,1)=v$, and put
    \[
            \widetilde f_i=f_i\circ M.
    \]
    The highest homogeneous part of $\widetilde f_i$ is $h_i\circ M$, and
    \[
            (h_i\circ M)(0,1)=h_i(v)>0.
    \]
    Thus the coefficient of $y^{\deg f_i}$ in $\widetilde f_i$ is positive.
    In particular, each $\widetilde f_i$ has positive $y$-leading coefficient.
    Since $M$ induces an isomorphism of the corresponding rings, the
    claim follows from Theorem~\ref{positive-leading} applied to
    $\widetilde f_1,\ldots,\widetilde f_n$.
\end{proof}

\begin{thm}
    Let $f_1,\dots, f_n\in\R[x,y]$ be square-independent polynomials such that each $f_i$ is a sum of squares in $\R[x,y]$. Then
    \[
    p_2\brac{\R[x,y,\sqrt{f_1},\dots,\sqrt{f_n}]} = \infty.
    \]
\end{thm}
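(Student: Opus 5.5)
Since each $f_i$ is a sum of squares in $\R[x,y]$, the sign conditions of Theorem~\ref{positive-leading} are automatically close to being satisfied. The only missing hypothesis is a common zero. So I plan to rerun the proof of Theorem~\ref{positive-leading} with the final step replaced.

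First I check the leading-coefficient hypothesis. A nonzero sum of squares $f=\sum_j q_j^2$ has top homogeneous part $h=\sum q_{j,top}^2$, summed over those $j$ with $\deg q_j$ maximal. This is a nonzero psd form, so it is positive on a nonempty open subset of $\R^2$. Since finitely many nonzero forms have nowhere dense zero sets, I can pick $v$ with $h_i(v)>0$ for all $i$. After the linear change of coordinates from the Corollary, every $f_i$ has positive $y$-leading coefficient. Products of such polynomials again have this property, so every $f_S$ does.

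Next I repeat the descent of Theorem~\ref{positive-leading} word for word with the same $F_m$ and $r_m$. The substitution $y=x^{r_L}$, the orthogonal transformation, the $\deg_D$ comparison and the peeling of unweighted squares use only the positivity of the $y$-leading coefficients and square-independence. The latter is used implicitly to write elements uniquely as $\sum_S a_S\sqrt{f_S}$ and to expand $h^2$. Hmm: the expansion $h_i^2=\sum_S f_S a_S^2+\text{cross terms}$ is how the earlier proof obtained $F_{L+1}=\sum_S f_S a_{\nu,S}^2$. I will justify it the same way: by square-independence $\R[x,y,\sqrt{f_1},\dots,\sqrt{f_n}]$ is free over $\R[x,y]$ with basis $\sqrt{f_S}$, so the $\sqrt{f_S}$-component of the equation, for $S=\varnothing$, is exactly that. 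After the descent I arrive at
\[
1=F_1=\sum_\lambda f_{S_\lambda} q_\lambda^2,\qquad S_\lambda\neq\varnothing .
\]

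The main obstacle is to get a contradiction from this identity without a common zero. I cannot evaluate at a point, so I plan to use degrees instead. Every nonzero term $f_{S_\lambda}q_\lambda^2$ has positive $\deg_D$-leading coefficient, so no cancellation occurs among the top terms. Hence every term must have $\deg_D$ equal to $0$, which forces each $f_{S_\lambda}$ and $q_\lambda$ to be constant. But then some $f_S$ with $S\neq\varnothing$ is a positive constant, hence a square, contradicting square-independence.

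Actually this argument shows the common-zero hypothesis in Theorem~\ref{positive-leading} is used only to exclude constant $f_S$. Here constants are already excluded by square-independence, since a positive constant is a square. If some $f_i$ is constant, it is a positive constant and therefore a square, which is again excluded. So the only genuinely new ingredients are choosing the direction $v$ and replacing the evaluation step by the degree argument above. I will also double-check that the peeling step never divides by something that could vanish.
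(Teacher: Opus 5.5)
Your argument is correct, but it follows a different route from the paper's.

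The paper never reruns the descent. It takes polynomials $G_m$ of length $m$ in $\R[x,y]$, which exist because $p_2(\R[x,y])=\infty$, and writes each $G_m$ as $L$ squares in the extension. Taking the $\R[x,y]$-component gives $G_m=\sum_\nu\sum_S f_S a_{\nu,S}^2$. Since each $f_S$ is a sum of $\ell_S$ squares, every $f_S a_{\nu,S}^2$ becomes $\ell_S$ squares, so $G_m$ has length at most $L\sum_S\ell_S$, a contradiction for large $m$. In that proof the sum-of-squares hypothesis is used essentially, to turn weighted squares back into squares. It is a short reduction to the cited infiniteness result for the polynomial ring.

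You use the hypothesis only weakly: to find a direction $v$ where all top forms are positive, and to rule out constant $f_i$. You then redo the explicit $F_m$-descent of Theorem~\ref{positive-leading}, and replace the evaluation at a common zero by the $\deg_D$ comparison on $1=\sum_\lambda f_{S_\lambda}q_\lambda^2$. The supporting checks are fine: the $y$-leading coefficient is multiplicative, and nonconstancy of the $f_S$ is what the first substitution step needs to force $a_{\nu,S}(x,x^{r_L})=0$. Your route is longer and relies on the details of that descent.

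In exchange, it proves more. As you observe, positive $y$-leading coefficients plus square-independence already force every $f_i$ to be nonconstant, since a negative constant has negative $y$-leading coefficient and a positive one is a square. So the common-zero hypothesis in Theorem~\ref{positive-leading} and its Corollary is superfluous. Your argument gives $p_2=\infty$ for any square-independent family whose top-degree forms are simultaneously positive at some $v$. That includes families that are not sums of squares and have no common zero, where neither the paper's reduction nor its Corollary applies.
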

\begin{proof}
    Since each $f_i$ is a sum of squares, every $f_S$ is also a sum of squares. Indeed, if
    \[
            f_i=\sum_{j=1}^{N_i} g_{i,j}^2,
    \]
    then a product $f_S=\prod_{i\in S}f_i$ can be expanded as
    \[
            f_S = \sum_{(j_i)_{i\in S}} \left(\prod_{i\in S}g_{i,j_i}\right)^2.
    \]
    Thus each $f_S$ has finite length in $\R[x,y]$ and denote it by $\ell_S$. Set
    \[
    M = \sum_{S\subseteq [n]} \ell_S.
    \]
    Assume to the contrary, that $p_2(\R[x,y,\sqrt{f_1},\dots,\sqrt{f_n}])=L$. Since $p_2(\R[x,y]) = \infty$ we can choose a sequence $(G_m)_{m\geq1}$ of polynomials such that the length of $G_m$ is $m$ in $\R[x,y]$. We may write
    \[
    G_m = \sum_{\nu=1}^L h_\nu^2,
    \]
    for some $h_\nu\in\R[x,y,\sqrt{f_1},\dots,\sqrt{f_n}]$. Expanding and taking the $\R[x,y]$ component yields
    \begin{equation}
        G_m = \sum_{\nu=1}^{L} \sum_{S\subseteq [n]} f_Sa_{\nu,S}^2.
        \label{Gm-squares}
    \end{equation}
    Now for each $S\subseteq\{1,\dots, n\}$ choose a representation of $f_S$ as a sum of $\ell_S$ squares
    \[
    f_S=\sum_{j=1}^{\ell_S} u_{S,j}^2.
    \]
    Then
    \[
    f_Sa_{\nu,S}^2 = \sum_{j=1}^{\ell_S} (u_{S,j}a_{\nu,S})^2.
    \]
    Therefore the right hand side of \eqref{Gm-squares} contains at most $L\sum\ell_S = LM$ squares. Hence, by taking $m>LM$ we obtain a contradiction.
\end{proof}

\begin{prop}
    Let $f_1,\dots, f_n\in\R[x,y]$ be square-independent polynomials. Let
    \[
    K=\{(x,y)\in\mathbb R^2 :\ f_i(x,y)\geq 0
    \text{ for all }i=1,\ldots,n\}.
    \]
    If $K=\varnothing$, then
    \[
    p_2\brac{R[x,y,\sqrt{f_1},\ldots,\sqrt{f_n}]} <\infty.
    \]
\end{prop}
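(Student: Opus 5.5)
The plan is to show that the ring $A:=\R[x,y,\sqrt{f_1},\dots,\sqrt{f_n}]$ is \emph{not} real, i.e. that $-1$ is a sum of squares in $A$. For rings in which $2$ is invertible this forces a finite Pythagoras number, by the standard identity argument below. Note that square-independence plays no role here: it is only needed in the infinite-case results above.

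\textbf{Step 1 (Positivstellensatz in $\R[x,y]$).} Since $K=\{f_1\ge 0,\dots,f_n\ge 0\}$ is empty, I will apply Stengle's Positivstellensatz to the basic closed semialgebraic set $K$. Its \emph{infeasibility} form says that $-1$ lies in the preordering generated by $f_1,\dots,f_n$:
\[
-1=\sum_{S\subseteq[n]}\sigma_S\, f_S ,
\]
where each $\sigma_S\in\R[x,y]$ is a sum of squares and $f_\varnothing=1$. Write $\sigma_S=\sum_{j=1}^{k_S} q_{S,j}^2$ with $q_{S,j}\in\R[x,y]$. This is the only nontrivial input, and it is a classical theorem rather than something to prove here. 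I would cite it, for instance from \cite{pd2001}.

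\textbf{Step 2 (transfer to $A$).} In $A$ we have $f_S=\big(\prod_{i\in S}\sqrt{f_i}\big)^2=(\sqrt{f_S})^2$. Pushing the identity of Step 1 into $A$ along the natural map $\R[x,y]\to A$ gives
\[
-1=\sum_{S\subseteq[n]}\sum_{j=1}^{k_S}\big(q_{S,j}\sqrt{f_S}\big)^2 .
\]
So $-1$ is a sum of $s:=\sum_S k_S$ squares in $A$.

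\textbf{Step 3 (bounding the Pythagoras number).} Since $\tfrac12\in A$, every $a\in A$ satisfies
\[
a=\Big(\frac{a+1}{2}\Big)^2+(-1)\Big(\frac{a-1}{2}\Big)^2 .
\]
Substituting the expression for $-1$ from Step 2 writes $a$ as a sum of at most $s+1$ squares. In particular every sum of squares in $A$ has length at most $s+1$, so $p_2(A)\le s+1<\infty$.

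The main (and only) obstacle is Step 1: obtaining a certificate of emptiness of $K$ in which the products $f_S$ appear multiplied by sums of squares. Stengle's theorem provides exactly this, and that form is what makes Step 2 work. A plain real Nullstellensatz would not suffice, since it does not produce the $f_S$ terms. Once that certificate is available, the remaining steps are formal.
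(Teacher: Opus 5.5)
Your proof is correct. Its skeleton coincides with the paper's: show that $-1$ is a sum of squares in $A=\R[x,y,\sqrt{f_1},\dots,\sqrt{f_n}]$, then use $a=\bigl(\frac{a+1}{2}\bigr)^2-\bigl(\frac{a-1}{2}\bigr)^2$. The only difference is where the certificate for $-1$ comes from.

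The paper presents $A$ as $\R[x,y,z_1,\dots,z_n]/(z_1^2-f_1,\dots,z_n^2-f_n)$. It notes that the real zero set of this ideal in $\R^{n+2}$ is empty exactly because $K$ is empty, and then applies the ordinary real Nullstellensatz in $n+2$ variables. You instead apply Stengle's Positivstellensatz in $\R[x,y]$ and substitute $f_S=(\sqrt{f_S})^2$.

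These are two sides of the same fact. The ring $\R[x,y,z_1,\dots,z_n]/(z_i^2-f_i)$ is free over $\R[x,y]$ on the squarefree monomials $z^S$. Taking the $z$-free component of the paper's identity $-1=\sum g_i^2$ therefore returns exactly a certificate $-1=\sum_S\sigma_S f_S$ of the form you use. So the paper's argument in effect derives, from the real Nullstellensatz, the case of Stengle's theorem you need.

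Your route is slightly more explicit, since the squares are visibly $q_{S,j}\sqrt{f_S}$. It also uses only the relations $(\sqrt{f_i})^2=f_i$ in $A$, not a presentation of $A$ as a quotient. The paper's route needs only the Nullstellensatz rather than the Positivstellensatz.

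Consequently, your closing remark that ``a plain real Nullstellensatz would not suffice'' is inaccurate. It is insufficient in $\R[x,y]$, where $K$ is not an algebraic set. It does suffice once the square roots are turned into new variables, which is exactly what the paper does.
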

\begin{proof}
    We have an isomorphism
    \[
    \R[x,y,\sqrt{f_1},\ldots,\sqrt{f_n}] \cong \R[x,y,z_1,\dots,z_n]/I,
    \]
    where
    \[
    I = \brac{z_1^2-f_1,\dots,z_n^2-f_n}.
    \]
    Then
    \[
    \mathcal V_\R(I) = \left\{ (x,y,z_1,\dots,z_n)\in\R^{n+2} : \ z_i^2=f_i(x,y)
    \text{ for all }i=1,\ldots,n \right\}.
    \]
    Note that if $K=\varnothing$, then $\mathcal{V}_\R(I)=\varnothing$. Thus, by the real Nullstellensatz, we get that $-1$ is a sum of squares of polynomials modulo $I$. Hence, passing to the quotient, we can write
    \[
    -1 = \sum_{i=1}^k g_i^2,\qquad\text{where }g_i\in \R[x,y,\sqrt{f_1},\ldots,\sqrt{f_n}].
    \]
    Thus, given an arbitrary element $a\in \R[x,y,\sqrt{f_1},\ldots,\sqrt{f_n}]$, we have
    \[
    a = \brac{\frac{a+1}{2}}^2 + \sum_{i=1}^k \brac{\frac{a-1}{2}\cdot g_i}^2,
    \]
    and so
    \[
    p_2\brac{R[x,y,\sqrt{f_1},\ldots,\sqrt{f_n}]} \leq k+1<\infty.
    \]
\end{proof}

\begin{remark}
    It is worth to point out the problem of computing 2-Pythagoras number for projective normal surfaces was carried out in \cite{osz2025}. Surfaces whose coordinate rings are considered in this paper can be smooth or singular. 
\end{remark}

\vspace{0.3cm}
We now turn our attention to general even powers. It turns out that changing the approach slightly, it is possible to extend the result from Theorem~\ref{positive-leading} to higher Pythagoras numbers. 

\begin{thm}
    Let $f_1,\dots, f_n\in \R[x,y]$ be square-independent, non-constant polynomials. Assume that their $y$-leading coefficients are positive. Then for any integer $s>1$ we have
    \[
    p_{2s}\brac{\R[x,y,\sqrt{f_1},\dots, \sqrt{f_n}]} = \infty.
    \]
\end{thm}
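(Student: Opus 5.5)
The plan is to imitate the proof of Theorem~\ref{positive-leading}, replacing the coefficientwise comparison there by an argument inside an ordered field. For $s>1$ the $\R[x,y]$-component of $h^{2s}$, with $h=\sum_S a_S\sqrt{f_S}$, contains mixed products and is no longer a weighted sum of squares. So instead of expanding, I will evaluate along curves $y=x^{r}$ and work in the real closed field $\mathcal P$ of real Puiseux series in $x^{-1}$, ordered by the sign of the leading coefficient (so that $x\to+\infty$).

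\textbf{Step 1: embeddings along the curves.} Take $r_1>1+\max_i\deg f_i$ and $r_k>\sum_{j<k}r_j$, and put $F_1=1$, $F_{k+1}=F_k\,(y-x^{r_k})^{2s}+1$. For each $r=r_k$ I will construct a ring homomorphism
\[
\Phi_r:\R[x,y,\sqrt{f_1},\dots,\sqrt{f_n}]\to\mathcal P,\qquad y\mapsto x^r,\quad \sqrt{f_i}\mapsto \sqrt{f_i(x,x^r)}.
\]
This is well defined because, as in the proof of Theorem~\ref{positive-leading}, $r>D$ makes the leading term of $f_i(x,x^r)$ equal to the $y$-leading coefficient times a power of $x$. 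That coefficient is positive, so the positive square root exists in $\mathcal P$. Non-constancy of $f_i$ together with $r>\deg f_i$ should ensure $\deg_x f_i(x,x^r)>0$.

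Suppose $p_{2s}=L<\infty$ and write $F_{L+1}=\sum_{\nu=1}^L h_\nu^{2s}$. Applying $\Phi_{r_L}$ gives $1=\sum_\nu \Phi(h_\nu)^{2s}$ in $\mathcal P$. Since the ordering is by leading coefficients, every $\Phi(h_\nu)$ lies in the valuation ring and
\[
\sum_\nu c_\nu^{2s}=1,
\]
where the $c_\nu\in\R$ are the constant terms.

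\textbf{Step 2: the peeling step, which I expect to be the main obstacle.} For squares, orthogonal invariance let us normalise to $a_L=1+(y-x^{r_L})b$ and $a_\nu=(y-x^{r_L})b_\nu$ for $\nu<L$. For $2s$-th powers there is no such symmetry group, so a replacement is needed.

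My first attempt is to show that in fact all but one of the $\Phi(h_\nu)$ vanish identically and the remaining one equals $\pm1$. This would be done via $\deg_D$-weight bookkeeping on a lift of the identity to the next order in $(y-x^{r_L})$. In the ring $\R[x,y,\sqrt{f_S}]$, the element $y-x^{r_L}$ should generate a prime that is unramified, because $f_S(x,x^{r_L})$ is not a square along the curve for generic large $r$. Hence $\ker\Phi_{r_L}$ is generated by $y-x^{r_L}$, and I can write $h_\nu=c_\nu+(y-x^{r_L})g_\nu$ with $g_\nu$ in the ring.

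Expanding $F_{L+1}$ and comparing the first-order terms gives an equation of the form
\[
F_L(y-x^{r_L})^{2s}=2s(y-x^{r_L})\sum_\nu c_\nu^{2s-1}g_\nu+O\bigl((y-x^{r_L})^2\bigr).
\]
I would then use the growth $r_L>\sum_{j<L}r_j$ in a $\deg_D$ estimate, applied after the other maps $\Phi_r$ with $r\neq r_L$, to force each $c_\nu\ne0$ term to be constant and the others to be divisible by $(y-x^{r_L})$. This is the analogue of the argument that $t=0$ in the quadratic case.

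\textbf{Step 3: induction and the final contradiction.} The outcome of Step 2 would be a representation of $F_L$ with at most $L-1$ summands of $2s$-th powers, possibly plus terms killed by all the later substitutions. Iterating down to $F_1=1$ then leaves too few terms. The contradiction at the bottom should come from the leading-coefficient ordering on $\mathcal P$ rather than from a common zero; this is consistent with the absence of a common-zero hypothesis in the statement.

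The delicate points are the divisibility claim (the kernel of $\Phi_r$ being principal and generated by $y-x^r$) and forcing the constants to concentrate in a single summand. If the latter fails, I would instead bound the number of nonzero $c_\nu$ and count how many summands are lost at each of the $L$ stages.
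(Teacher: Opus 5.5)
Your outline (the sequence $F_k$, the substitution $y=x^{r_L}$, constants $c_\nu$ with $\sum_\nu c_\nu^{2s}=1$, peeling off $\gamma=y-x^{r_L}$, and descending to $F_1$) matches the paper's. However, both load-bearing steps are missing.

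First, a single embedding $\Phi_{r_L}$ only shows that $\Phi(h_\nu)$ lies in the valuation ring with residue $c_\nu$. Writing $h_\nu=c_\nu+\gamma g_\nu$ needs $\Phi(h_\nu)=c_\nu$ exactly, and that does not follow. For $f_1=x^2+1$, which satisfies the hypotheses, $\sqrt{f_1}-x$ maps to $\sqrt{x^2+1}-x$, which is infinitesimal but nonzero. The paper closes this with the group $G\cong(\Z/2\Z)^n$ of sign changes of the $\sqrt{f_i}$. Every $\sigma\in G$ is an $\R[x,y]$-algebra automorphism, so $\sum_\nu\sigma(h_\nu)^{2s}=F_{L+1}$ and all conjugates are bounded along the curve. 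Averaging against the characters $\chi_T$ isolates each component $a_{\nu,T}(x,x^{r_L})\sqrt{f_T(x,x^{r_L})}$, which is therefore bounded. Since $f_T(x,x^{r_L})\to\infty$ for $T\neq\varnothing$ (this is where non-constancy is used), one gets $a_{\nu,T}(x,x^{r_L})=0$ for $T\neq\varnothing$ and $a_{\nu,\varnothing}(x,x^{r_L})=c_\nu$. Divisibility by $\gamma$ is then applied coefficientwise in $\R[x,y]$, where $\gamma$ is monic in $y$. So your unproven claim $\ker\Phi_{r_L}=\gamma A$ is never needed. It also fails for some admissible $r$: with $f_1=y$ and $r$ even, $\sqrt{f_1}-x^{r/2}$ lies in the kernel.

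Second, Step 2 contains no working mechanism, and its first attempt cannot succeed, since a representation may split the constant among several summands, e.g.\ $F_2=\gamma_1^{2s}+2\cdot(2^{-1/(2s)})^{2s}$. Once $h_\nu=c_\nu+\gamma q_\nu$ with $q_\nu=\sum_S b_{\nu,S}\sqrt{f_S}$, the paper proceeds as follows.

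(i) It puts $\deg_A\bigl(\sum_S k_S\sqrt{f_S}\bigr)=\max_S\bigl(\deg_D k_S+\tfrac12\deg_D f_S\bigr)$ on $A=\R[x,y,\sqrt{f_1},\dots,\sqrt{f_n}]$. Let $\pi$ be the projection to the $\R[x,y]$-component. Because $\pi(u^2)=\sum_S f_Sk_S^2$ has no cancellation of leading terms, $\deg_A u^2=\deg_D\pi(u^2)=2\deg_A u$. Comparing with $\deg_D F_{L+1}=2s\sum_{j\le L}r_j$ yields $\deg_A q_\nu<r_L$.

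(ii) The linear coefficient $\sum_\nu c_\nu^{2s-1}q_\nu$ is divisible by $\gamma$ but has $\deg_A<r_L$, hence vanishes.

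(iii) Since $s>1$, the left side $\gamma^{2s}F_L$ lies in $\gamma^3A$, so $\sum_\nu c_\nu^{2s-2}q_\nu^2\in\gamma A$. Applying $\pi$ gives $\sum_{\nu,S}c_\nu^{2s-2}f_Sb_{\nu,S}^2\in\gamma\R[x,y]$. On the curve every nonzero summand has positive leading coefficient, so $\gamma\mid q_\nu$ whenever $c_\nu\neq0$, and then $q_\nu=0$ by (i).

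Hence each $h_\nu$ is either the constant $c_\nu$ or $\gamma q_\nu$. Then $F_L=\sum_{c_\nu=0}q_\nu^{2s}$ is an honest representation in $A$ with at most $L-1$ terms, and after $L$ rounds $1=F_1$ is an empty sum. No leftover terms or ordering argument appear at the bottom. Step (iii) is exactly where $s>1$ is used, a hypothesis your sketch never invokes.
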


\begin{proof}
    We will carry out the proof for the case $s=2$. The argument for the higher powers works similarly.

    Similarly to Theorem~\ref{positive-leading}, consider a sequence of positive integers $(r_m)_{m\geq1}$ such that
    \[
    r_1>1+\max_i \deg f_i\quad \text{and} \quad r_k>\sum_{i=1}^{k-1} r_i. 
    \]
    Further, define a sequence of polynomials $(F_m)_{m\geq 1}$ by letting $F_1=1$ and
    \[
    F_{n+1} = F_n\brac{y-x^{r_n}}^4+1.
    \]

    Assume now to the contrary that
    \[
     p_{2s}\brac{\R[x,y,\sqrt{f_1},\dots, \sqrt{f_n}]} = L<\infty.
    \]
    Then we have
    \begin{equation}
     F_{L+1} = \sum_{i=1}^L h_i^4,
        \label{fourth-power-basic}
    \end{equation}
    for some $h_i\in \R[x,y,\sqrt{f_1},\dots, \sqrt{f_n}]$. Write
    \[
    h_i=\sum_{S\subseteq[n]}a_{i,S}\sqrt{f_S}.
    \]

    Consider now the group of automorphisms
    \[
    G = \operatorname{Aut}_{\mathbb R[x,y]}\brac{\R[x,y,\sqrt{f_1},\dots, \sqrt{f_n}]}
    \]
    of $\R[x,y,\sqrt{f_1},\dots, \sqrt{f_n}]$ as an $\R[x,y]$ algebra. 
    Note that every element of $G$ is determined by its value at $\sqrt{f_i}$ which is sent to a root of $T^2-f_i$, hence to $\pm\sqrt{f_i}$. Conversely, by square-independence, the independent sign changes of the elements $\sqrt{f_i}$ all define automorphisms. Thus $G$ is precisely the group of sign changes of $\sqrt{f_i}$. In particular $G\cong(\mathbb Z/2\mathbb Z)^n$.
    
    Applying any element $\sigma\in G$ to equation \eqref{fourth-power-basic} yields
    \[
    F_{L+1} = \sum_{i=1}^L \sigma(h_i)^4.
    \]
    For any $u\in\R[x,y,\sqrt{f_1},\dots, \sqrt{f_n}]$ denote for convenience
    \[
    \overline{u} := u(x,x^{r_L}),
    \]
    and write $\gamma = y-x^{r_L}$. Setting $\gamma=0$ gives us
    \[
    1 = F_{L+1}(x,x^{r_L}) = \sum_{i=1}^L\overline{\sigma(h_i)}^{\, 4}.
    \]
    It follows that every function $\overline{\sigma(h_i)}$ is bounded for sufficiently large $x$.

    For $T\subseteq[n]$, let 
    \[
    \chi_T:G\longrightarrow\{\pm1\}
    \]
    be the character determined by
    \[
    \sigma(\sqrt{f_T}) = \chi_T(\sigma)\sqrt{f_T}.
    \]
    
    Fix a subset $T\subseteq[n]$, an index $i$ and consider a weighted average of the form
    \begin{equation}
    \frac{1}{|G|}\sum_{\sigma\in G} \chi_T(\sigma)\sigma(h_i) = \frac{1}{|G|}\sum_{\sigma\in G} \chi_T(\sigma)\sum_{S\subseteq[n]}a_{i,S}\chi_S(\sigma)\sqrt{f_S}.
        \label{character-average}
    \end{equation}
    Recall, that by character orthogonality we have (see \cite{kowalski2021} for details)
    \[
    \frac{1}{|G|} \sum_{\sigma\in G} \chi_T(\sigma)\chi_S(\sigma) = 
    \begin{cases}
    1,& \text{if}\;S=T \\
    0,& \text{if}\;S\neq T
    \end{cases}
    \]
    Hence we can rearrange \eqref{character-average} into
    \[
    \sum_{S\subseteq[n]}a_{i,S}\sqrt{f_S}\brac{\frac{1}{|G|} \sum_{\sigma\in G} \chi_T(\sigma)\chi_S(\sigma)} = a_{i,T}\sqrt{f_T}.
    \]
    After substituting $\gamma=0$ this average thus yields
    \[
    \frac{1}{|G|}\sum_{\sigma\in G} \chi_T(\sigma)\overline{\sigma(h_i)} = a_{i,T}(x,x^{r_L}) \sqrt{f_T(x,x^{r_L})}.
    \]
    Since $\overline{\sigma (h_i)}$ are bounded, the same must be true for $a_{i,T}(x,x^{r_L}) \sqrt{f_T(x,x^{r_L})}$. Hence
    \[
        a_{i,T}(x,x^{r_L})=0 \qquad(\varnothing\neq T\subseteq[n]),
    \]
    and
    \[
            a_{i,\varnothing}(x,x^{r_L})=c_i\in\mathbb R,\qquad \sum_{i=1}^L c_i^4 =1 .
    \]
    Consequently, we may write
    \[
    h_i = c_i+\gamma g_i,\qquad g_i\in \R[x,y,\sqrt{f_1},\dots, \sqrt{f_n}].
    \]

    Consider as before $D = 1+\max\deg f_i$, define a degree on monomials via
    \[
    \deg_D(x^ay^b) = a+Db,
    \]
    and extend it to $\R[x,y]$ in the natural way. We will now construct a degree function on $\R[x,y,\sqrt{f_1},\dots, \sqrt{f_n}]$ in the following way: for
    \[
    u = \sum_{S\subseteq[n]}k_{S}\sqrt{f_S},
    \]
    we write
    \[
    \deg_A u = \max_{S\subseteq[n]\,:\, k_S\neq0}\brac{ \deg_D k_S + \frac{1}{2}\deg_D f_S}.
    \]
    We have
    \[
    \deg_A(u+v)\leq \max(\deg_A u,\deg_A v)\quad\text{and }\quad \deg_A(uv)\leq \deg_A u+\deg_A v  .
    \]
    Furthermore
    \[
    \deg_A(pu) = \deg_D(p) + \deg_A(u),\qquad\text{for nonzero }\; p\in\R[x,y].
    \]
    Indeed, if
    \[
    u =  \sum_{S\subseteq[n]} k_S\sqrt{f_S},\qquad\text{then }\; pu = \sum_{S\subseteq[n]} (pk_S)\sqrt{f_S},
    \]
    and therefore
    \[
    \deg_A(pu)=\max_S(\deg_Dp+\deg_D k_S+\frac12 \deg_D f_S)=\deg_D p+\deg_A u.
    \]
    In particular $\deg_A(\gamma u) = r_L + \deg_A u$.

    Denote by
    \[
    \pi:\R[x,y,\sqrt{f_1},\ldots,\sqrt{f_n}]\longrightarrow\R[x,y]
    \]
    the projection onto the $f_\varnothing$ component. Let $u\in \R[x,y,\sqrt{f_1},\ldots,\sqrt{f_n}]$ be arbitrary. Then
    \[
    \pi(u^2) = \sum_{S\subseteq[n]}f_S k_{S}^2,\qquad\text{where }\; u=\sum_{S\subseteq[n]} k_S\sqrt{f_S}.
    \]
    Thus
    \[
    \deg_D \pi(u^2) = \max_S(2\deg_D k_{S}+ \deg_D f_S) = 2\deg_A u.
    \]
    On the other hand
    \[
    \deg_D \pi(u^2)\leq \deg_A u^2\leq 2\deg_A u,
    \]
    hence
    \[
    \deg_A u^2 = \deg_D \pi(u^2) = 2\deg_A u,
    \]
    for arbitrary $u$. It follows that
    \[
    \deg_D \pi(h_i^4) = 4\deg_A h_i.
    \]
    Moreover,
    \[
    F_{L+1} = \sum_{i=1}^L\pi(h_i^4),
    \]
    and the leading terms of the nonzero summands on the right cannot cancel. Therefore
    \[
    \deg_DF_{L+1} = 4\max_i\deg_Ah_i.
    \]
    Since
    \[
    \deg_DF_{L+1}=4\sum_{j=1}^Lr_j,
    \]
    we may conclude that
    \[
    \deg_Ah_i\leq\sum_{j=1}^Lr_j.
    \]
    On the other hand, $h_i=c_i+\gamma q_i$, and hence, whenever $q_i\neq 0$,
    \[
            \deg_Ah_i  =  \deg_A(\gamma q_i)   =   r_L+\deg_Aq_i.
    \]
    Consequently,
    \begin{equation}
        \deg_A q_i \leq r_1+\dots+r_{L-1}<r_L.
            \label{q-degree-bound}
    \end{equation}
    
    Recall that we have
    \[
    F_{L+1}=1+\gamma^4F_L = \sum_{i=1}^L(c_i+\gamma q_i)^4\qquad\text{and }
    \qquad\sum_{i=1}^L c_i^4=1.
    \]
    After cancelling the $1$'s we are left with
    \begin{equation}
            \gamma^4F_L =
        4\gamma\sum_i c_i^3q_i
        +6\gamma^2\sum_i c_i^2q_i^2
        +4\gamma^3\sum_i c_iq_i^3
        +\gamma^4\sum_i q_i^4.
            \label{main-fourth-equation}
    \end{equation}

    Dividing by $\gamma$ and reducing modulo $\gamma$ shows that
    \[
            \gamma\mid \sum_i c_i^3q_i.
    \]
    However,
    \[
    \deg_A\brac{ \sum_i c_i^3q_i } \leq \max_i\deg_A q_i<r_L
    \]
    and every nonzero element divisible by $\gamma$ has degree at least $\deg_D\gamma = r_L$. Therefore the divisibility is only possible if
    \[
    \sum_i c_i^3 q_i = 0.
    \]
    Now, dividing \eqref{main-fourth-equation} by $\gamma^2$ and taking modulo $\gamma$, we obtain
    \begin{equation}
    \sum_i c_i^2q_i^2\in\gamma \R[x,y,\sqrt{f_1},\ldots,\sqrt{f_n}].
        \label{modulo-gamma}
    \end{equation}
    Write
    \[
    q_i=\sum_{S\subseteq[n]}b_{i,S}\sqrt{f_S}.
    \]
    Note that $\pi$ is $\R[x,y]$-linear, so applying it to \eqref{modulo-gamma} yields
    \[
    \sum_{i=1}^L\sum_{S\subseteq[n]} c_i^2 f_S b_{i,S}^2 \in \gamma\mathbb R[x,y].
    \]
    Substituting $\gamma=0$ leaves us with
    \[
    \sum_{i=1}^L\sum_{S\subseteq[n]} c_i^2 f_S(x,x^{r_L}) b_{i,S}(x,x^{r_L})^2 = 0.
    \]
    Every nonzero summand here has positive leading coefficient. Therefore, whenever $c_i\neq 0$ we must have
    \[
    b_{i,S}(x,x^{r_L})=0 \qquad\text{for every }S\subseteq[n].
    \]
    Thus
    \[
    \gamma\mid q_i
    \qquad(c_i\neq0),
    \]
    but from \eqref{q-degree-bound} we know that $\deg_A q_i<r_L$, whereas every nonzero multiple of $\gamma$ has degree at least $r_L$. Thus $q_i=0$ whenever $c_i$ is nonzero. Hence
    \[
    h_i=
    \begin{cases}
    c_i, & c_i\neq0,\\
    \gamma q_i, & c_i=0.
    \end{cases}
    \]

    Returning to \eqref{main-fourth-equation} we now have (after dividing by $\gamma^4$)
    \[
        F_L=\sum_{c_i=0}q_i^4.
    \]
    Since we know that at least one $c_i$ is nonzero, this represents $F_L$ as a sum of at most $L-1$ fourth powers. Repeating the same argument with $F_L,F_{L-1},\ldots,F_2$, we eventually obtain a representation of $F_1=1$ as a sum of zero fourth powers, which is impossible. This contradiction proves that
    \[
    p_4\brac{\R[x,y,\sqrt{f_1},\ldots,\sqrt{f_n}]}=\infty.
    \]

\end{proof}

\section{Sums of even powers of $0$-regulous functions.}
In this section we will discuss sums of even powers of 0-regulous functions. 

The third author and Vill proposed the following conjecture.
\begin{conj}\cite[Conjecture 5.2]{kv2023}
Let $R$ be a commutative ring with identity. Then the following conditions are equivalent.
\begin{enumerate}
\item $p_2(R)<\infty$
\item $p_{2n}(R) <\infty$ for some $n$
\item $p_{2n}(R) <\infty$ for all $n$.
\end{enumerate}
\end{conj}

The above conjecture is known to be true for fields (\cite{becker1982}) and some polynomial rings \cite{bk2024, kv2023}. Also, it is true for valuation rings (this follows from unital Pythagoras number \cite[Theorem 5.17]{bp1996}). Here, we will prove it for the ring of $0$-regulous functions on an irreducible $0$-regulous variety.  Let us provide necessary definitions.
\begin{df}
Let $s$ be a positive integer and $k$ be a nonnegative integer. We say that a continuous function $f: \R^s \rightarrow \R$ is $k$-regulous on $\R^s$ if $f$ is of class $\mathcal{C}^k$ and $f$ is a rational function, i.e. there exists a non-empty Zariski open subset $U\subset \R^s$ such that $f|_U$ is regular.
\end{df}
Family of $k$-regulous functions defines a topology on $\R^s$ which is called constructible topology. If $X\subset \R^s$ is a closed $k$-regulous subset, we define the ring of $k$-regulous functions on $X$ as a quotient ring 
$$\RR^k(X):=\RR^k(\R^s)/\mathcal{I}(X)$$
where $\mathcal{I}(X)$ is the set of all $k$-regulous functions on $\R^s$ which vanish on $X$.

For basic properties of the ring of $k$-regulous functions on a $k$-regulous variety we refer the reader to \cite{bk2023, fhmm2016} (see also \cite{banecki2024, banecki2025} for some recent results). Main theorem of this section is the following.
\begin{thm}
    Let $X$ be an irreducible $0$-regulous variety of dimension $s$. Then 
    $$ p_{2n}(\RR^0(X)) < \infty $$
    for any positive integer $n$.
\end{thm}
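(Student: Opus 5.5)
Here is the plan. Let $f\in\RR^0(X)$ be a sum of $2n$th powers of $0$-regulous functions. The idea is to pass to the function field, apply Becker's finiteness theorem there, and then repair denominators using the flexibility of $0$-regulous functions. Since $X$ is irreducible of dimension $s$, the ring $\RR^0(X)$ embeds in the field of rational functions $K=\R(X)$. This field has transcendence degree $s$ over $\R$. By \cite{becker1982}, $p_{2n}(K)$ is finite, and in fact bounded by a constant $N=N(n,s)$ depending only on $n$ and $s$. Hence
$$ f=\sum_{i=1}^{N} r_i^{2n} $$
with $r_i\in K$. The task is to show that the $r_i$ may be replaced by genuine $0$-regulous functions, at the cost of a bounded number of extra summands.

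The first key step is a continuity/extension argument. Write $r_i=p_i/q$ with a common denominator $q$. On the Zariski-dense open set where $q\neq 0$, we have $|r_i|\le f^{1/2n}$. Since $f$ is continuous, each $r_i$ is locally bounded. For $0$-regulous functions in dimension one this boundedness already gives continuous extension. In general, however, a bounded rational function need not extend continuously (e.g.\ $x^2/(x^2+y^2)$). So I would not try to extend the $r_i$ directly. Instead I would multiply by a high power of $f$. Set
$$ g_i = r_i\, f^{k}. $$
For $k\ge1$, the estimate $|g_i|\le f^{k+1/2n}$ shows that $g_i$ tends to $0$ wherever $f$ vanishes. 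Away from the zero set of $f$, the strategy is to modify $f$ on the zero set of $q$ using the structure of regulous functions. Here the standard fact I would invoke is: a rational function that is continuous on the complement of a proper Zariski-closed set, and bounded by a continuous function vanishing there, extends to a $0$-regulous function (cf.\ \cite{fhmm2016}). This suggests working not with $f$ but with the denominator locus $Z=\{q=0\}$.

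Concretely, the plan is to use induction on dimension. Consider the restriction of $f$ to the closed $0$-regulous subset $Z\cap X$. It is again a sum of $2n$th powers in $\RR^0(Z\cap X)$, since we can restrict the original representation. By induction on $s$ (the case $s=0$ being trivial, as then the ring is a finite product of $\R$'s), $f|_Z$ is a sum of $N_{s-1}$ $2n$th powers $u_j^{2n}$. The $u_j$ lift to $\RR^0(X)$ because $0$-regulous functions on closed subsets extend. Then $f-\sum u_j^{2n}$ vanishes on $Z$, but it need not be a sum of $2n$th powers. To fix this, I would instead run the argument in the following order:
\begin{enumerate}
\item Use the rational representation together with the bound $|r_i|\le f^{1/2n}$ on a neighbourhood of $Z$.
\item Use the partition-of-unity-like behaviour of regulous functions. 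In the constructible topology, open sets admit regulous "cutoff" functions of the form $h^{2}/(h^2+q^{2m})$.
\item Glue the rational representation away from $Z$ with the inductive representation near $Z$. This is done as $f=\phi f+(1-\phi)f$, where $\phi$ and $1-\phi$ are themselves $2n$th powers of regulous functions, e.g.\ $\phi=a^{2n}/(a^{2n}+b^{2n})$.
\end{enumerate}
This gives a bound of the form $p_{2n}(\RR^0(X))\le N+N_{s-1}$.

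The main obstacle is the gluing step. One must check that a product such as $\phi\cdot r_i^{2n}=(\phi^{1/2n}r_i)^{2n}$ is genuinely $0$-regulous, i.e.\ continuous at $Z$. Likewise, $(1-\phi)f$ must be a bounded sum of $2n$th powers near $Z$. For this I would choose $\phi=q^{2nM}/(q^{2nM}+w)$ with $w$ a suitable positive regulous function. Taking $M$ large, $\phi^{1/2n}r_i=q^{M}r_i/(q^{2nM}+w)^{1/2n}$ kills the pole of $r_i$. The hardest part is ensuring that the $2n$th root $(q^{2nM}+w)^{1/2n}$ and the complementary factor are $0$-regulous. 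I would try first to avoid roots altogether. The idea is to choose $\phi=\alpha^{2n}/\beta^{2n}$ with $\alpha,\beta$ regulous, relying on the fact that $0$-regulous functions are closed under quotients bounded by regulous functions vanishing on the pole set.
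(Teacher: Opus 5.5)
Your proposal has a genuine gap at the step you flag as the main obstacle, and the proposed repair does not close it. The estimate $|r_i|\le f^{1/2n}$ only controls the $r_i$ near $\mathcal{Z}(f)$. The real obstruction is the indeterminacy locus $\{q=0\}$ at points where $f>0$, and there a bounded rational function can be genuinely discontinuous. This already happens on curves, so your remark about dimension one is not correct as stated: $y/x$ on the nodal cubic $y^2=x^2(x+1)$ is bounded near the node but has limits $\pm1$ along the two branches.

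The induction-and-gluing scheme is circular. Induction only produces a representation of $f$ restricted to $\{q=0\}$. The decomposition $f=\phi f+(1-\phi)f$, however, needs $(1-\phi)f$ to be a sum of boundedly many $2n$th powers of regulous functions on a whole constructible neighbourhood of $\{q=0\}$. That is the original problem again, and as you note yourself, $f-\sum u_j^{2n}$ need not be a sum of $2n$th powers. Likewise, with $\phi=\alpha^{2n}/\beta^{2n}$ you would have to write $1-\phi=(\beta^{2n}-\alpha^{2n})/\beta^{2n}$ as a bounded sum of $2n$th powers of regulous functions. That is just the unital form of the statement being proved. So the bound $N+N_{s-1}$ is not established.

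The missing idea, which the paper uses, is to work in the ring $\RR^0(U)$ with $U=X\setminus\mathcal{Z}(f)$ instead of the function field. $U$ is again a $0$-regulous variety of dimension $s$, and $f$ is a unit there. The quadratic case gives $p_2(\RR^0(U))\le 2^s$ \cite[Theorem 2.6]{bk2023}, and $1+\sum\RR^0(U)^2\subset\RR^0(U)^*$. The Becker--Powers theorem (Theorem~\ref{Becker Powers}) then gives a bound $N_n$, depending only on $n$ and $s$, on the unital $2n$-Pythagoras number of $\RR^0(U)$. Hence $f|_U=\sum_{i=1}^{N_n}f_i^{2n}$ with each $f_i$ already continuous on $U$, so no indeterminacy locus ever appears.

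The only extension needed is then across $\mathcal{Z}(f)$, and there your own estimate $|f_i|\le f^{1/2n}$ makes the extension by zero continuous. Your device of clearing denominators with powers of $f$ is exactly what finishes the argument at this point. Write $f_i=g_i/f^N$ using $\RR^0(X)_f=\RR^0(U)$ \cite[Proposition 1.4]{bk2023}. The extension lemma \cite[Lemma 2.8]{bk2023} then shows the extended functions lie in $\RR^0(X)$, which gives $p_{2n}(\RR^0(X))\le N_n$.
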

Before we prove the above, we need some preparations. Let $R$ be a commutative ring with identity. 

\begin{df}
We define the $2n$-th unital Pythagoras number of $R$,  $p^*_{2n}(R)$, to be the least positive integer $\ell$ such that any sum of $2n$-th powers of elements of $A$, which is a unit, can be expressed as a sum of $\ell$ $2n$-th powers of elements of $A$. If such a number does not exist, we put $p^*_{2n}(R)=\infty$.
\end{df}
Of course, for any ring $R$ and any positive integer $n$  we have an inequality $p^*_{2n}(R) \leq p_{2n}(R)$. As noted in \cite{bp1996}, one of this numbers can be finite while the other is not. For $R=\Z[x]$ we have $p^*_{2n}(\Z[x])=1$ and $p_{2n}(\Z[x])=\infty$ (cf. \cite{cldr1982}).

Becker and Powers studied the above invariant.

\begin{thm}\label{Becker Powers}\cite[Theorem 5.17]{bp1996}
Let $R$ be a commutative ring with identity. Assume that $1+\sum R^2 \subset R^*$. Then the following conditions are equivalent.
\begin{enumerate}
\item $p^*_2(R)<\infty$
\item $p^*_{2n}(R) <\infty$ for some $n$
\item $p^*_{2n}(R) <\infty$ for all $n$.
\end{enumerate}
Additionally, if $p^*_2(R)=p < \infty $ then
$$p_{2n}^*(R) \leq p^2 {2n+2+p \choose p}G(2n).$$
Here, $G(n)$ is the function related to the classical Waring problem.
\end{thm}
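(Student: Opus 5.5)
Theorem~\ref{Becker Powers} is quoted from \cite{bp1996}, so the paper only needs it as stated. Still, here is how I would reconstruct a proof. The bound $p^2\binom{2n+2+p}{p}G(2n)$ suggests the argument, and I have not checked every step below.

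The implications (3)$\Rightarrow$(2) are trivial. For (2)$\Rightarrow$(1), take $u\in R^*\cap\sum R^2$ and write $u=u^{-2k}\cdot u^{2k+1}$ for a suitable $k$. I would then try to show that $u^{2k+1}$ is a unit that is a sum of $2n$-th powers times a square. Assuming this works, a bound on $2n$-th powers gives a bound on squares, since every $2n$-th power is a square. I expect this direction to be routine.

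The substance is (1)$\Rightarrow$(3) with the explicit bound. Assume $p_2^*(R)=p$ and let $u\in R^*\cap\sum R^{2n}$.
\begin{enumerate}
\item Since $u$ is in particular a sum of squares and a unit, write $u=b_1^2+\dots+b_p^2$.
\item Apply Hilbert's identity (the polynomial identity behind Hilbert's solution of Waring's problem). It writes $(X_1^2+\dots+X_p^2)^n$ as a combination, with positive rational coefficients $\lambda_k$, of $(2n)$-th powers of linear forms $L_k(X)$ with rational coefficients. The number of forms is bounded by a binomial coefficient of the shape $\binom{2n+2+p}{p}$, coming from the dimension count for forms of degree $2n$ in $p$ variables.
\item Substituting $X_i=b_i$ expresses $u^n$ as $\sum_k \lambda_k L_k(b)^{2n}$.
\item Use Lagrange-type reasoning together with $G(2n)$ to write each positive rational $\lambda_k$, after clearing a common $2n$-th power denominator, as a sum of at most $G(2n)$ $2n$-th powers of rationals. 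Rationals are available in $R$ because $1+\sum R^2\subset R^*$ makes every positive integer invertible. This gives a representation of $u^n$ with at most $\binom{2n+2+p}{p}G(2n)$ terms.
\end{enumerate}

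The main obstacle is passing from $u^n$ back to $u$ itself, which accounts for the remaining factor $p^2$. I would use the hypothesis $1+\sum R^2\subset R^*$ in two ways:
\begin{itemize}
\item $u^{-1}=u\cdot(u^{-1})^2$ is again a unit sum of squares, so $u^{-1}=c_1^2+\dots+c_p^2$.
\item Write $u=u^{n}\cdot u^{1-n}$ and attempt to express the correcting factor $u^{1-n}$ (an odd or even power of $u^{\pm1}$) as a sum of at most $p^2$ terms, each a product of squares that can be absorbed into $2n$-th powers.
\end{itemize}
Concretely, I would first try to find an identity of the form $u=\sum_{i,j}(\text{square in }b_i,c_j)\cdot(\text{element of the above }u^{n}\text{-representation})$. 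Here each term is arranged to be an honest $2n$-th power, so the product of two $p$-term representations gives the $p^2$ factor.

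If this bookkeeping fails, I would fall back on the original hypothesis that $u\in\sum R^{2n}$. I would then work in the preordering $\sum R^{2n}$, where Becker's field-theoretic arguments \cite{becker1982} go through because the relevant denominators are units. In either approach, the combination with the Hilbert-identity representation is where I expect the real difficulty to lie.
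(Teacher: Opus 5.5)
The paper does not prove this statement; it is quoted from \cite{bp1996}. Judged on its own, your proposal has a genuine gap exactly where you locate the difficulty.

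Your Steps 1--4 are fine: Hilbert's identity plus the rational Waring step show that $u^n$ is a sum of boundedly many $2n$-th powers. But these steps, and the identity you then hope to find, use only that $u$ and $u^{-1}$ are unit sums of $p$ squares. The hypothesis $u\in\sum R^{2n}$ is never used, and no argument of that form can succeed. Take $R=\R(x)$, which satisfies $1+\sum R^2\subset R^*$ and has $p_2^*(R)=2$. Let $u=x^2$ and $n\geq 2$. Then $u=x^2$, $u^{-1}=(1/x)^2$ and $u^n=x^{2n}$, so every ingredient you propose to combine is available. Yet $x^2$ is not a sum of $2n$-th powers in $\R(x)$, by Becker's criterion quoted in Section~4 ($2n\nmid 2=\deg x^2$). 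Hence no identity built from the $b_i$, the $c_j$ and the Hilbert representation of $u^n$ can write $u$ as a sum of honest $2n$-th powers. The missing idea is how to turn a representation $u=\sum_{i=1}^m a_i^{2n}$ of uncontrolled length $m$ into one whose length depends only on $p$ and $n$. That is the real content of \cite{bp1996}. Your fallback (``Becker's arguments go through'') names this step without carrying it out. Incidentally, $\binom{2n+2+p}{p}$ counts monomials of degree $2n+2$ in $p+1$ variables, not forms of degree $2n$ in $p$ variables, so the stated bound does not come from the count you describe.

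The direction (2)$\Rightarrow$(1) is also not routine. For odd $n$ your idea works. Hilbert's identity, with as many variables as needed, and the rational Waring step give $u^n\in\sum R^{2n}\cap R^*$ for every unit sum of squares $u$. Then $u=u^n\cdot\bigl(u^{-(n-1)/2}\bigr)^2$ is a sum of $p_{2n}^*(R)$ squares. For even $n$, however, no odd power of $u$ need be a unit sum of $2n$-th powers times a square. Work in $\R(x)$ with $n=2$ and $u=1+x^2$, and suppose $u^{2k+1}=\sigma w^2$ with $\sigma\in\sum\R(x)^4$. Then $u=\sigma(p/q)^2$ for some polynomials $p,q$, so $(1+x^2)p^2q^2=\sigma p^4$ is a sum of fourth powers. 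Becker's criterion then forces $\deg(pq)$ to be odd. It also forces every real root of $pq$ to have even multiplicity, which (non-real roots coming in conjugate pairs) makes $\deg(pq)$ even, a contradiction. So this direction also needs an idea you have not supplied.
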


Let us recall one more result

\begin{lemma}\label{Banecki lemat przedłuzanie}\cite[Lemma 2.8]{bk2023}
Let $X\subset\R^n$ be an affine 0-regulous variety and $f,g\in\RR^0(X)$ be $0$-regulous functions such that
\begin{equation*}
    Z=\{x\in X:f(x)=0\}\subset\{x\in X:g(x)=0\}.
\end{equation*}
If the function $h:X \rightarrow \R$ defined as
\begin{equation*}
    h:=
    \begin{cases}
        \frac{g}{f} \text{ on } X \backslash Z \\
        0 \text{ otherwise}
    \end{cases}
\end{equation*}
\end{lemma}
is continuous, then $h \in\RR^0(X)$.

\noindent
\textit{Proof of Theorem 3.3}
    If $n=1$ then  \cite[Theorem 2.6]{bk2023} gives us $p_2(\RR^0(X)) \leq 2^s$. Also, let us stress out that if $V \subset X$ is an open constructible set, then $V$ is again a $0$-regulous variety of dimension $s$ \cite[Proposition 1.3]{bk2023}, hence $p_2(\RR^0(V)) \leq 2^s$.

Assume now $n>1$ and let $f \in \RR^0(X)$ be a sum of $2n$th powers. Denote by $Z=\mathcal{Z}(F)$ - the zero set of $f$, and let $U = X\setminus Z$ be its complement.
It follows from the case $n=1$ and Theorem \ref{Becker Powers} that there exists a positive integer $N_n$ such that $p_{2n}^*(\RR^0(V)) \leq N_n$ for any $s$-dimensional $0$-regulous irreducible set.

Consider now $f \in \RR^0(U)$. It is a strictly positive function on $U$, hence it is a unit. By the above discussion, we may write $f=\sum_{i=1}^{N_n}f_i^{2n}$, where $f_i \in \RR^0(U)$. We know that $\RR^0(X)_f=\RR^0(U)$ by \cite[Proposition 1.4]{bk2023}, and so, we may write $f_i=\frac{g_i}{f^{l_i}}$ for some $g_i \in \RR^0(X)$ and nonnegative integers $l_i$, for $i=1,2,\dots, N_n$. Without loss of generality we may assume that all of the $l_i$'s are equal to some positive integer $N$.

We may now define functions 
\begin{equation*}
    h_i:=
    \begin{cases}
        \frac{g_i}{f^N} \text{ on } U \\
        0 \text{ on } Z
    \end{cases}
\end{equation*}
for $i=1,2,\dots, N_n$. Such functions are continuous on $X$, and are $0$-regulous by Lemma \ref{Banecki lemat przedłuzanie}. This finishes the proof.

\begin{remark}
    The above proof is in a similar spirit as the proof of \cite[Theorem 2.6]{bk2023}. The ring of $0$-regulous functions is the first known example (other than fields and valuation rings) for which we are able to show finiteness of the higher even Pythagoras numbers. The extendability of quotients of such functions is crucial. It is not known how to tackle similar problems for the ring of regular functions (cf. \cite[Remark 4.14]{kowalczyk2025}) or $k$-regulous functions for $k>0$.
\end{remark}
\section{Bad points of higher order}

In this chapter we will study size of \textit{the bad set of a polynomial}.

\begin{df}
Let $f \in \R[x_1,x_2,\dots, x_n]$ be a nonnegative polynomial. Define

        \begin{align*}
        \mathrm{Denom}_{2n}(f) &= \Bigl\{ G \in \mathbb{R}[x_1,\dots, x_s]  \mid G^{2n} \cdot f = \sum_{i =1}^N H_i^{2n} \text{ for some } H_i \in \R[x_1,\dots, x_s] \Bigr\}, \\   
       % \mathrm{Bad}_{2k}(f) &= \Bigl\{ x \in \mathbb{R}^n \mid \text{for all } G \in \mathrm{Denom}_{2k}(f) \text{ equality } G(x)=0 \text{ holds} \Bigr\}.
        \end{align*}
Let $I_{2n}(f)$ be the ideal generated by $\mathrm{Denom}_{2n}(f)$. We define the set of bad points of order $2n$ of $f$, $\mathrm{Bad}_{2n}(f)$ as the zero set of the ideal $I_{2n}(f)$.

\end{df}
Of course, it may happen that there does not exist such $G$ as above, i.e. $f$ is not a sum of $2n$th powers in the field of fractions, then $\mathrm{Denom}_{2n}(f)=\{0\}$. The set of the bad points of order $2n$ of $f$ is the common zero set of all possible denominators in all representations of $f$ a sum of $2n$th powers of rational functions.

Firstly, let us discuss what is known for $n=1$.

\begin{thm}
    Let $X \subset \R^s$ be a nonsingular irreducible algebraic surface and $f \in \R[X]$ be a polynomial function which is a sums of squares in $\R(X)$. Then $\mathrm{codim}(\mathrm{Bad}_2(f)) \geq 3$.
\end{thm}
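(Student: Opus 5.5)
We need to prove that for a nonsingular irreducible algebraic surface $X$ and $f\in\R[X]$ a sum of squares in $\R(X)$, the bad set $\mathrm{Bad}_2(f)$ has codimension at least $3$. Since $X$ has dimension $2$, codimension $\geq 3$ means $\mathrm{Bad}_2(f)=\varnothing$. So the claim is that the ideal $I_2(f)$ has no real zeros on $X$. Equivalently, for every point $p\in X$ there is a denominator $G\in\mathrm{Denom}_2(f)$ with $G(p)\neq 0$.

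The plan is to reduce to a local statement at each point $p$. Let $A=\OO_{X,p}$ be the local ring of $X$ at $p$. Because $X$ is nonsingular of dimension $2$, $A$ is a regular local ring of dimension $2$. The key input is the local version of Hilbert's 17th problem in dimension two, due to Scheiderer in his work on sums of squares on real surfaces: over a $2$-dimensional regular local ring whose residue field is real closed, every element that is positive semidefinite is a sum of squares in the ring itself, not merely in its fraction field. Let me check that $f$ satisfies the hypothesis. Since $f$ is a sum of squares in $\R(X)$, we have $f\geq 0$ wherever the denominators are defined. Such points are dense in $X(\R)$, so by continuity $f\geq0$ on all of $X(\R)$, hence $f$ is psd in $A$. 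We therefore obtain $f=\sum a_i^2$ with $a_i\in A$.

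Now clear denominators. Each $a_i=h_i/g_i$ with $h_i,g_i\in\R[X]$ and $g_i(p)\neq0$. Put $G=\prod g_i$, so that
$$G^2f=\sum_i\Bigl(h_i\prod_{j\neq i}g_j\Bigr)^2 .$$
Then $G\in\mathrm{Denom}_2(f)$ (interpreted on $\R[X]$) and $G(p)\neq 0$. Since $p$ was arbitrary, the zero set of $I_2(f)$ in $X$ is empty, which gives codimension $\geq 3$.

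The main obstacle is the local step: that a psd element of a $2$-dimensional regular local ring is a sum of squares there. This is where dimension $2$ and nonsingularity are essential. I would not reprove it; I would cite Scheiderer's theorem for regular local rings of dimension two. If a self-contained route is preferred, the approach is to pass to the completion $\R[[u,v]]$, where psd series are sums of two squares. Going back from the completion to $A$ is the genuinely hard part, and it uses the local-global principle that Scheiderer establishes.
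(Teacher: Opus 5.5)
Your proposal is correct and takes the same route as the paper: the paper simply invokes Scheiderer's theorem that psd $=$ sos holds in two-dimensional regular local rings. You additionally spell out the steps the paper leaves implicit. These are that $f$ is psd in $\OO_{X,p}$ (by density of the points where the denominators are defined, which uses nonsingularity), and that clearing denominators yields $G\in\mathrm{Denom}_2(f)$ with $G(p)\neq 0$ at every $p\in X$, so that $\mathrm{Bad}_2(f)=\varnothing$.
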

If $X=\R^2$ then this is precisely \cite[Proposition 5.1]{delzellPhd}. The more general case follows from the result of Scheiderer \cite[Theorem 4.8]{scheiderer2001} that psd=sos holds for local regular rings of dimension 2. Bad points of order greater than 2 have not been studied before. 

Here, we are able to show the following result.

\begin{thm} \label{THM: Bad_2k może mieć codim 2 dla k>1}
   Let $n>1$ be a positive integer. Then there exists a polynomial $f \in \R[x,y]$ which is a sum of $2n$th powers of rational functions and $\mathrm{codim}\big(\mathrm{Bad}_{2n}(f)\big) = 2$ holds.
\end{thm}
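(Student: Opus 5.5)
The plan is to find $f$ whose zero at the origin has the wrong order to be a local sum of $2n$th powers, while $f$ is still a sum of $2n$th powers in $\R(x,y)$. Then every denominator must vanish at the origin, and off the origin there are good denominators, so $\mathrm{Bad}_{2n}(f)=\{0\}$, which has codimension $2$.

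\emph{Step 1 (local order obstruction).} Suppose $G^{2n}f=\sum_{i=1}^N H_i^{2n}$ with $G(0)\neq 0$. Let $d=\min_i \mathrm{ord}_0 H_i$ and let $h_i$ be the degree-$d$ homogeneous part of $H_i$. The lowest-degree part of the right-hand side is $\sum_i h_i^{2n}$, taken over those $i$ with $\mathrm{ord}_0 H_i=d$. This is a nonzero form, since a sum of even powers of real forms, not all zero, does not vanish identically. Hence $\mathrm{ord}_0(f)=\mathrm{ord}_0(G^{2n}f)=2nd$ is divisible by $2n$. So any nonnegative $f$ with $\mathrm{ord}_0 f=2$ that lies in $\sum \R(x,y)^{2n}$ has $0\in \mathrm{Bad}_{2n}(f)$, because $n>1$.

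\emph{Step 2 (choice of $f$).} I must avoid the same obstruction at infinity. For $f=x^2+y^2$ the top homogeneous part gives the same divisibility contradiction, since $2\not\equiv 0 \bmod 2n$. So I take
$$f=(x^2+y^2)(1+x^2+y^2)^{n-1},$$
which has $\mathrm{ord}_0 f=2$ and degree $2n$. Its homogenization $F=(x^2+y^2)(x^2+y^2+z^2)^{n-1}$ is a ternary form of degree $2n$. It is strictly positive on $\mathbb{P}^2(\R)$ except at the point $(0:0:1)$, where it vanishes to order $2$.

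\emph{Step 3 (showing $f\in\sum\R(x,y)^{2n}$).} This is the main obstacle. I would first try Becker's description of $\sum K^{2n}$ for a field $K$ \cite{becker1982}. In that description, the sums of $2n$th powers are exactly the elements that are positive at all orderings of higher level dividing $n$. One then has to check, using a description of such orderings of $\R(x,y)$ by their centers, that $f$ is positive at all of them. The only dangerous centers are over the origin, and there the factor $x^2+y^2$ has to be handled, for instance by blowing up the origin. On the chart $y=tx$ one gets $f=x^2(1+t^2)(1+x^2+x^2t^2)^{n-1}$, and $x^2$ is a square of a square times $x^{2-4k}$, so the issue reduces to whether $x^2$ is a sum of $2n$th powers after allowing denominators.

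If Becker's criterion is hard to apply, I would try a direct construction. The Hilbert-type identity writes $(x^2+y^2)^n=\sum_j\lambda_j L_j^{2n}$ with $\lambda_j>0$ and $L_j$ linear forms. I would combine it with multiplying by suitable $2n$th powers of denominators, such as powers of $x^2+y^2$ and of $1+x^2+y^2$. The goal is to produce an explicit identity $G^{2n}f=\sum H_i^{2n}$, with $G$ vanishing only at the origin. If the exact form of $f$ above resists, I would adjust the positive unit factor, keeping $\mathrm{ord}_0 f=2$ and $2n\mid\deg f$.

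\emph{Step 4 (bad set is exactly the origin).} I also need $\mathrm{Bad}_{2n}(f)\subseteq\{0\}$, i.e.\ for each $p\neq 0$ some denominator $G$ with $G(p)\ne0$. Given one representation with denominator $G$, I would compose it with linear changes of coordinates fixing the origin. These carry $f$ to a polynomial of the same shape. Alternatively, I would use that $f$ is a strictly positive unit near $p$, together with finiteness of sums of $2n$th powers in the relevant localizations. Combining Steps 1 and 4 gives $\mathrm{Bad}_{2n}(f)=\{0\}$, so $\mathrm{codim}\,\mathrm{Bad}_{2n}(f)=2$.
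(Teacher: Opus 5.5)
Your Step 3 cannot succeed, and the obstruction is already in your Step 1: for $n>1$, no polynomial with $\mathrm{ord}_0 f=2$ is a sum of $2n$th powers in $\R(x,y)$. To see this, drop the assumption $G(0)\neq 0$ from Step 1. Suppose $G\neq 0$ and $G^{2n}f=\sum_i H_i^{2n}$. Lowest-degree forms at the origin multiply, so $\mathrm{ord}_0(G^{2n}f)=2n\,\mathrm{ord}_0 G+\mathrm{ord}_0 f$. Your no-cancellation argument gives $\mathrm{ord}_0\bigl(\sum_i H_i^{2n}\bigr)=2n\min_i\mathrm{ord}_0 H_i$. Hence $2n\mid\mathrm{ord}_0 f$.

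Conceptually, $\mathrm{ord}_0$ is a valuation of $\R(x,y)$ with formally real residue field $\R(y/x)$, so it takes values in $2n\Z$ on $\sum\R(x,y)^{2n}$. This is the same obstruction you spotted at infinity for $x^2+y^2$. It also reappears in your blow-up chart, where $x^2\notin\sum\R(x,t)^{2n}$. So for $f=(x^2+y^2)(1+x^2+y^2)^{n-1}$ we have $\mathrm{Denom}_{2n}(f)=\{0\}$ and $\mathrm{Bad}_{2n}(f)=\R^2$, which has codimension $0$. Changing the unit factor cannot help. Multiplying by $2n$th powers of denominators, as in your Hilbert-identity idea, only shifts $\mathrm{ord}_0$ by multiples of $2n$.

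The missing idea is that the local obstruction at the bad point must be invisible to real valuations, so that $f$ is still a sum of $2n$th powers in the fraction field. The paper gets this from a form $f$ of degree $2n$ that is positive definite but not a sum of $2n$th powers of linear forms: $x^4+7x^2y^2+y^4$ for $n=2$, and $x^{2n}+x^{2n-2}y^2+y^{2n}$ for $n>2$.
\begin{itemize}
\item Becker's one-variable criterion gives a strictly positive $h\in\R[x]$ with $h^{2n}f(x,1)\in\sum\R[x]^{2n}$.
\item Homogenizing gives a denominator $\widetilde h$ that is a positive definite form, so $\mathrm{Bad}_{2n}(f)\subseteq\{0\}$ at once.
\item This also replaces your Step 4, which as written is not justified. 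Rotating one representation need not give a denominator that is nonzero at a given $p\neq 0$, since $G$ could vanish on a circle centred at the origin.
\item If some $G\in\mathrm{Denom}_{2n}(f)$ had $G(0)\neq 0$, comparing lowest-degree forms in $G^{2n}f=\sum_i H_i^{2n}$ would give $G(0)^{2n}f=\sum_i h_i^{2n}$ with $h_i$ linear forms, a contradiction.
\end{itemize}
The last step is your Step 1 with ``order'' replaced by ``initial form''.
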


The above theorem will be proved by constructing an explicit examples. Let us first recall crucial fact.

\begin{thm}\cite[Theorem 1.9]{becker1982}
    Polynomial $f \in \R[x]$ can be expressed as
    $
    f = \sum_{i=1}^N {\big(\frac{g_i}{h}\big)}^{2n}
    $
    for some $g_i \in \R[x]$ and $h \in \R[x] \backslash \{0\}$
    if and only if the following conditions are satisfied
    \begin{enumerate}
        \item $f(x) \geq 0$ for all $x \in \R$,
        \item $2n \mid \deg(f)$, and
        \item $2n$ divides the multiplicity of $x - x_0$ in $f$ for every $x_0 \in \R$.
    \end{enumerate}
\end{thm}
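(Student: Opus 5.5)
Since this is the cited result \cite[Theorem 1.9]{becker1982}, I would prove it directly, in two parts. Necessity is a valuation and leading-term argument. Sufficiency reduces, after factoring, to a strictly positive polynomial of degree divisible by $2n$, which I would handle with Reznick's uniform-denominator theorem.

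\emph{Necessity.} Suppose $h^{2n} f=\sum_{i=1}^N g_i^{2n}$ with $h\neq 0$.
\begin{enumerate}
\item Outside the finitely many zeros of $h$ we have $f\geq 0$, so $f\geq 0$ on all of $\R$ by continuity.
\item The leading coefficient of each nonzero $g_i^{2n}$ is positive, so the top-degree terms on the right cannot cancel. Hence the right-hand side has degree $2n\max_i \deg g_i$. The left-hand side has degree $2n\deg h+\deg f$, so $2n\mid\deg f$.
\item Fix $x_0\in\R$ and expand everything in powers of $x-x_0$. The lowest-order coefficient of each nonzero $g_i^{2n}$ is again positive, so $\mathrm{ord}_{x_0}$ of the right-hand side equals $2n\min_i \mathrm{ord}_{x_0} g_i$. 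The left-hand side has order $2n\,\mathrm{ord}_{x_0}h+\mathrm{ord}_{x_0}f$. Therefore $2n$ divides the multiplicity of $x-x_0$ in $f$.
\end{enumerate}

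\emph{Sufficiency, reduction.} Sums of $2n$th powers in the field $\R(x)$ are closed under products, since $\big(\sum a_i^{2n}\big)\big(\sum b_j^{2n}\big)=\sum_{i,j}(a_ib_j)^{2n}$. Write $f=\prod_j (x-x_j)^{2nk_j}\cdot p$, which is possible by condition (3). Here $p$ has no real roots, so $p>0$ on $\R$ by condition (1) (the case $f=0$ being trivial). Each factor $(x-x_j)^{2nk_j}=\big((x-x_j)^{k_j}\big)^{2n}$ is a single $2n$th power. By condition (2), $2n\mid\deg p$, say $\deg p=2m$ with $n\mid m$. So it suffices to show that every strictly positive $p$ with $2n\mid \deg p$ is a sum of $2n$th powers in $\R(x)$. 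If $\deg p=0$, then $p=(p^{1/2n})^{2n}$ and we are done.

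\emph{Sufficiency, main step.} Homogenize to $P(X,Y)=Y^{2m}p(X/Y)$. Because $p>0$ on $\R$ and its leading coefficient is positive, $P$ is a positive definite binary form. Reznick's uniform-denominator theorem then says that for all sufficiently large $r$, $(X^2+Y^2)^rP$ is a sum of $(2m+2r)$th powers of real linear forms. I would choose $r=2nt$ with $t$ large. Then $2m+2r$ is divisible by $2n$, so each summand $\ell^{2m+2r}$ equals $\big(\ell^{(m+r)/n}\big)^{2n}$. Moreover, $(X^2+Y^2)^r=\big((X^2+Y^2)^t\big)^{2n}$. Dividing by this and dehomogenizing at $Y=1$ gives
\[
p=\sum_i\left(\frac{\ell_i(x,1)^{(m+r)/n}}{(x^2+1)^t}\right)^{2n},
\]
which is the required representation.

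The main obstacle is this last step, namely producing the $2n$th-power representation of a positive definite binary form. I would cite Reznick's theorem rather than reprove it. If a self-contained argument were preferred, I would instead use that the cone of sums of $2n$th powers of degree-$k$ forms is full-dimensional (by polarization), with $(X^2+Y^2)^{nk}$ in its interior, and combine this with a Pólya-type approximation argument.
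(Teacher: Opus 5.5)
Your argument is correct. The paper itself gives no proof of this statement; it quotes it from \cite[Theorem 1.9]{becker1982}. There it is a specialization of Becker's general, valuation-theoretic description of sums of $2n$th powers in a field, phrased through orderings, real valuations and the real holomorphy ring. For $K=\R(x)$ the nontrivial real valuations are the $(x-x_0)$-adic ones and the degree valuation, and these produce conditions (3) and (2).

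Your necessity part is the hands-on check of exactly these valuation conditions: leading and lowest coefficients of $2n$th powers are positive, so they cannot cancel.

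For sufficiency you take a genuinely different route. You use multiplicativity of $\sum\R(x)^{2n}$ and factorization to reduce to a strictly positive $p$ with $2n\mid\deg p$, homogenize, and apply Reznick's uniform-denominator theorem. This gives an explicit, constructive representation:
\begin{itemize}
\item a uniform denominator $(x^2+1)^t$;
\item numerators that are powers of linear polynomials;
\item through Reznick's explicit bound, control of $t$ in terms of $\min P/\max P$ on the unit circle.
\end{itemize}
The cost is reliance on a considerably later, nontrivial theorem. Your method is also tied to positive definite forms over $\R$, whereas Becker's criterion works over arbitrary fields.

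One detail to keep explicit: you need the form of Reznick's theorem valid for every $r\geq N(P)$, which it does provide. You must take $r=2nt$, and you cannot bootstrap from a single admissible $r$, because multiplying by $X^2+Y^2$ does not preserve sums of powers of linear forms. For instance, $x^2$ is the square of a linear form, but $x^4+x^2y^2$ is not a sum of fourth powers of linear forms, since its $y^4$-coefficient vanishes.

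Your closing ``self-contained alternative'' is only a sketch, but nothing in the proof depends on it.
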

For the discussion of the above result, see also \cite[Introduction]{clpr1996}. Also, largest positive integer $N$ in the above theorem is the value of $p_{2n}(\R(x))$ which is studied in \cite{clpr1996}.

\noindent
\textit{Proof of Theorem \ref{THM: Bad_2k może mieć codim 2 dla k>1}:}
Obviously it cannot happen that  $\mathrm{codim}\big(\mathrm{Bad}_{2n}(f)\big) =0$. If this value is equal to 1 then in every presentation $$G^{2n}f =\sum_{i=1}^N f_i^{2n}$$ $G$ and $f_i$ for every $i=1,2,\dots, N$ vanish on a codimension 1 real algebraic subset of the real plane. However, as the ring $\R[x,y]$ is factorial, ideal of the set of such subset if principal, hence we may factor them out. As a consequences, the codimension of the bad set cannot be 1.

Let us now discuss the case $n=2$, as higher powers will follow analogously. 

Consider family of polynomials given by 
$$f_m = x^4+mx^2+1.$$
$f_m$ is a sum of fourth powers of polynomials iff $m\in [0,6]$ (cf. 
\cite[Exercise 7.4.4]{pd2001}). By the previous theorem, $f_m$ is a sum of fourth powers of rational functions iff $m\geq 0.$ Hence $f_7$ is not a sum of fourth powers of polynomials, but there exists a a strictly positive polynomial $h \in \R[x]$ such that $h^4f_7$ is a sum of fourth powers of polynomials. 

Denote by $\widetilde{f_7}=x^4+7x^2y^2+y^4$ its homogenization. If $\widetilde{h}$ is a homogenization of $h$ then $\widetilde{h}^4\widetilde{f_7}$ is a sum of fourth powers of polynomials. Of course homogenization of a strictly positive polynomial is a positive definite form, hence $\widetilde{h}$ vanishes only at the origin. As a consequence, $\mathrm{Bad}_4(f_7) \subset \{(0,0) \}$. It is enough to show that this set is non empty. 

Assume by contrary that the bad set of order 4 is empty.
If there exists a polynomial $g \in \mathrm{Denom}_4(\widetilde{f}_7)$ such that $g(0,0)\neq 0$, then $g^{4}+\widetilde{h}^{4} \in \mathrm{Denom}_4(\widetilde{f}_7)$, and it does not vanish anywhere. Without loss of generality, we may assume that there is $g \in \mathrm{Denom}(\widetilde{f}_7)$ which does not vanish. We have an equality
\begin{equation}\label{rownanko}
    g^4\widetilde{f}_7=\sum_{i=1}^Nf_i^4.
\end{equation}
Consider now the above equation in the ring $\R[[x,y]]$. In this ring, $g$ is a unit, and we may obtain the following

$$\widetilde{f}_7 = \sum_{i=1}^N H_i^4$$
for some power series $H_i \in \R[[x,y]]$.
By comparing terms of degree four we get a contradiction, since $\widetilde{f}_7$ is not a sum of fourth powers of polynomials. This shows that the bad set of order $4$ of $f_7$ consists of the origin and finishes the proof for $n=2$.

For $n>2$ one can reason similarly taking the polynomial $f_n=x^{2n}+x^{2n-2}+1$. It is strictly positive and of degree $2n$ hence it is a sum of $2n$th powers of rational functions. Straightforward calculations shows that it is not a sum of $2n$th powers of polynomials.
This finishes the proof.
\hfill \qed

\section{Open problems}
In Section 3 we have discussed Beckers Theorem. Here we will propose a weaker version of the conjecture proposed in \cite{kv2023}.
\begin{prob}

Let $R$ be a commutative ring with identity, and $n>1$ be a positive integer.
Does the inequality 
$$ p_2(R)\leq p_{2n}(R)$$
 holds?

\end{prob}
The above is particularly interesting for rings such that $p_2(R)=\infty$. Every known computed example follows the above rule, however the values of $p_2(R)$ and $p_{2n}(R)$ are computed on their own, and usually by different methods. To the authors best knowledge, there is no known relation between the Pythagoras numbers for general rings.

\section*{Acknowledgment}
This paper was realized as a part of Research Projects of Mathematical Students Society initiative at Jagiellonian University.

\bibliographystyle{plain}
\bibliography{refs.bib}

@article{kv2023,
  author  = {Tomasz Kowalczyk and Julian Vill},
  title   = {On Higher {Pythagoras} numbers of real polynomial rings},
  journal = {Indiana Univ. Math. J.},
  year    = {2026},
  note    = {To appear}
}

@Article{bk2023,
 Author = {Banecki, Juliusz and Kowalczyk, Tomasz},
 Title = {Sums of even powers of {{\(k\)}}-regulous functions},
 FJournal = {Indagationes Mathematicae. New Series},
 Journal = {Indag. Math., New Ser.},
 ISSN = {0019-3577},
 Volume = {34},
 Number = {3},
 Pages = {477--487},
 Year = {2023},
 Language = {English},
 DOI = {10.1016/j.indag.2022.12.004},
 zbMATH = {7677394},
 Zbl = {1526.11011}
}

@Article{bp1996,
 Author = {Becker, Eberhard and Powers, Victoria},
 Title = {Sums of powers in rings and the real holomorphy ring},
 FJournal = {Journal f{\"u}r die Reine und Angewandte Mathematik},
 Journal = {J. Reine Angew. Math.},
 ISSN = {0075-4102},
 Volume = {480},
 Pages = {71--103},
 Year = {1996},
 Language = {English},
 DOI = {doi:10.1515/crll.1996.480.71},
 URL = {www.degruyter.com/document/doi/10.1515/crll.1996.480.71/pdf},
 zbMATH = {950784},
 Zbl = {0922.12003}
}

@Misc{becker1982,
 Author = {Becker, Eberhard},
 Title = {The real holomorphy ring and sums of {{\(2n\)}}-th powers},
 Year = {1982},
 Language = {English},
 HowPublished = {G{\'e}om{\'e}trie alg{\'e}brique r{\'e}elle et formes quadratiques, {Journ{\'e}es} {Soc}. {Math}. {Fr}., {Univ}. {Rennes} 1981, {Lect}. {Notes} {Math}. 959, 139-181.},
 zbMATH = {3801665},
 Zbl = {0508.12020}
}

@article{fhmm2016,
 author = {Fichou, Goulwen and Huisman, Johannes and Mangolte, Fr{\'e}d{\'e}ric and Monnier, Jean-Philippe},
 title = {Regulous functions},
 fjournal = {Journal f{\"u}r die Reine und Angewandte Mathematik},
 journal = {J. Reine Angew. Math.},
 issn = {0075-4102},
 volume = {718},
 pages = {103--151},
 year = {2016},
 language = {French},
 doi = {10.1515/crelle-2014-0034},
 zbMATH = {6626588},
 Zbl = {1390.14172}
}

@Article{bk2024,
 Author = {B{\l}achut, Kacper and Kowalczyk, Tomasz},
 Title = {Sums of squares on hypersurfaces},
 FJournal = {Results in Mathematics},
 Journal = {Result. Math.},
 ISSN = {1422-6383},
 Volume = {79},
 Number = {2},
 Pages = {11},
 Note = {Id/No 90},
 Year = {2024},
 Language = {English},
 DOI = {10.1007/s00025-023-02118-8},
 zbMATH = {7802386}
}

@article{banecki2025,
 author = {Banecki, Juliusz},
 title = {Extensions of {{\(k\)}}-regulous functions from two-dimensional varieties},
 fjournal = {Mathematische Annalen},
 journal = {Math. Ann.},
 issn = {0025-5831},
 volume = {391},
 number = {2},
 pages = {2541--2585},
 year = {2025},
 language = {English},
 doi = {10.1007/s00208-024-02981-y},
 zbMATH = {7974021}
}

@misc{banecki2024,
      title={Extension of $k$-regulous functions from varieties of arbitrary dimension}, 
      author={Juliusz Banecki},
      year={2024},
      eprint={2412.14412},
 howpublished = {Preprint, {arXiv}:2412.14412 [math.{AG}] (2024)},

      archivePrefix={arXiv},
      primaryClass={math.AG},
      url={https://arxiv.org/abs/2412.14412}, 
}

@Article{cldr1982,
 Author = {Choi, Man-Duen and Dai, Zong Duo and Lam, Tsin Yuen and Reznick, Bruce},
 Title = {The {Pythagoras} number of some affine algebras and local algebras},
 FJournal = {Journal f{\"u}r die Reine und Angewandte Mathematik},
 Journal = {J. Reine Angew. Math.},
 ISSN = {0075-4102},
 Volume = {336},
 Pages = {45--82},
 Year = {1982},
 Language = {English},
 DOI = {10.1515/crll.1982.336.45},
 zbMATH = {3786936},
 Zbl = {0499.12018}
}

@misc{kowalczyk2025,
      title={Sums of squares of regular functions on rational surfaces}, 
      author={Tomasz Kowalczyk},
      year={2025},
      eprint={2407.20378},
 howpublished = {Preprint, {arXiv}:2407.20378 [math.{AG}] (2025)},
      archivePrefix={arXiv},
      primaryClass={math.AG},
      url={https://arxiv.org/abs/2407.20378}, 
}

@Article{scheiderer2001,
 Author = {Scheiderer, Claus},
 Title = {On sums of squares in local rings},
 FJournal = {Journal f{\"u}r die Reine und Angewandte Mathematik},
 Journal = {J. Reine Angew. Math.},
 ISSN = {0075-4102},
 Volume = {540},
 Pages = {205--227},
 Year = {2001},
 Language = {English},
 DOI = {10.1515/crll.2001.085},
 URL = {semanticscholar.org/paper/2f68210c4fcc49d86c7593b508724a2c4e243513},
 zbMATH = {1670928},
 Zbl = {0991.13014}
}

@Article{clpr1996,
 Author = {Choi, Man-Duen and Lam, Tsin Yuen and Prestel, Alexander and Reznick, Bruce},
 Title = {Sums of {{\(2m\)}}-th powers of rational functions in one variable over real closed fields},
 FJournal = {Mathematische Zeitschrift},
 Journal = {Math. Z.},
 ISSN = {0025-5874},
 Volume = {221},
 Number = {1},
 Pages = {93--112},
 Year = {1996},
 Language = {English},
 DOI = {10.1007/PL00004512},
 zbMATH = {861937},
 Zbl = {0859.12002}
}

@misc{osz2025,
      title={Finiteness of {Pythagoras} numbers of finitely generated real algebras}, 
      author={Yi Ouyang and Qimin Song and Chenhao Zhang},
      year={2025},
      eprint={2503.17751},
      archivePrefix={arXiv},
      primaryClass={math.NT},
      url={https://arxiv.org/abs/2503.17751}, 
}

@article{scheiderer2017,
 author = {Scheiderer, Claus},
 title = {Sum of squares length of real forms},
 fjournal = {Mathematische Zeitschrift},
 journal = {Math. Z.},
 issn = {0025-5874},
 volume = {286},
 number = {1-2},
 pages = {559--570},
 year = {2017},
 language = {English},
 doi = {10.1007/s00209-016-1773-z},
 zbMATH = {6726613},
 Zbl = {1420.11070}
}

@phdthesis{delzellPhd,
  author  = "Delzell, Charles Neal",
  title   = "A constructive, continuous solution to Hilbert 17th problem, and other results in semialgebraic geometry,",
  school  = "Stanford University",
  year    = "1980"
}

@Book{pd2001,
 Author = {Prestel, Alexander and Delzell, Charles N.},
 Title = {Positive polynomials. {From} {Hilbert}'s 17th problem to real algebra},
 FSeries = {Springer Monographs in Mathematics},
 Series = {Springer Monogr. Math.},
 ISSN = {1439-7382},
 ISBN = {3-540-41215-8},
 Year = {2001},
 Publisher = {Berlin: Springer},
 Language = {English},
 zbMATH = {1601019},
 Zbl = {0987.13016}
}

@article{frs2004,
 author = {Fernando, Jos{\'e} F. and Ruiz, Jes{\'u}s M. and Scheiderer, Claus},
 title = {Sums of squares in real rings},
 fjournal = {Transactions of the American Mathematical Society},
 journal = {Trans. Am. Math. Soc.},
 issn = {0002-9947},
 volume = {356},
 number = {7},
 pages = {2663--2684},
 year = {2004},
 language = {English},
 doi = {10.1090/S0002-9947-03-03438-X},
 zbMATH = {2091175},
 Zbl = {1080.14071}
}

@article{km2024,
 author = {Kowalczyk, Tomasz and Miska, Piotr},
 title = {On {Waring} numbers of henselian rings},
 fjournal = {Mathematika},
 journal = {Mathematika},
 issn = {0025-5793},
 volume = {70},
 number = {4},
 pages = {28},
 note = {Id/No e12276},
 year = {2024},
 language = {English},
 doi = {10.1112/mtk.12276},
 zbMATH = {8015217}
}

@Article{grimm2015,
 Author = {Grimm, David},
 Title = {Lower bounds for {Pythagoras} numbers of function fields},
 FJournal = {Commentarii Mathematici Helvetici},
 Journal = {Comment. Math. Helv.},
 ISSN = {0010-2571},
 Volume = {90},
 Number = {2},
 Pages = {365--375},
 Year = {2015},
 Language = {English},
 DOI = {10.4171/CMH/356},
 zbMATH = {6451262},
 Zbl = {1334.12002}
}

@misc{dghmy2026,
 author = {Nicolas Daans and Stevan Gajovi{\'c} and Siu Hang Man and Pavlo Yatsyna},
 title = {Pythagoras numbers for infinite algebraic fields},
 year = {2026},
 howpublished = {Preprint, {arXiv}:2502.11222 [math.{NT}] (2026)},
 doi = {10.1007/s13163-026-00562-y},
 url = {https://arxiv.org/abs/2502.11222},
 arXiv = {arXiv:2502.11222}
}

@article{santiago2025,
 author = {Laplagne, Santiago},
 title = {On the {Pythagoras} number for polynomials of degree 4 in 5 variables},
 fjournal = {Revista de la Uni{\'o}n Matem{\'a}tica Argentina},
 journal = {Rev. Uni{\'o}n Mat. Argent.},
 issn = {0041-6932},
 volume = {68},
 number = {1},
 pages = {343--348},
 year = {2025},
 language = {English},
 doi = {10.33044/revuma.4224},
 zbMATH = {8103035},
 Zbl = {1573.14179}
}

@article{tinkova2025,
 author = {Tinkov{\'a}, Magdal{\'e}na},
 title = {Bounds on the {Pythagoras} number and indecomposables in biquadratic fields},
 fjournal = {Proceedings of the Edinburgh Mathematical Society. Series II},
 journal = {Proc. Edinb. Math. Soc., II. Ser.},
 issn = {0013-0915},
 volume = {68},
 number = {3},
 pages = {843--868},
 year = {2025},
 language = {English},
 doi = {10.1017/S0013091525000112},
 zbMATH = {8086103}
}

@article{bds2025,
 author = {Blekherman, Grigoriy and Dunbar, Alex and Sinn, Rainer},
 title = {Pythagoras numbers for ternary forms},
 fjournal = {Proceedings of the American Mathematical Society},
 journal = {Proc. Am. Math. Soc.},
 issn = {0002-9939},
 volume = {153},
 number = {10},
 pages = {4177--4195},
 year = {2025},
 language = {English},
 doi = {10.1090/proc/17336},
 zbMATH = {8085817}
}

@misc{benoist2025,
 author = {Olivier Benoist},
 title = {The {Pythagoras} number of fields of transcendence degree {$1$} over {$\mathbb{Q}$}},
 year = {2025},
 howpublished = {Preprint, {arXiv}:2506.21380 [math.{AG}] (2025)},
 url = {https://arxiv.org/abs/2506.21380},
 arXiv = {arXiv:2506.21380}
}

@misc{dombek2025,
 author = {Daniel Dombek},
 title = {On biquadratic fields: when 5 squares are not enough},
 year = {2025},
 howpublished = {Preprint, {arXiv}:2506.20820 [math.{NT}] (2025)},
 doi = {10.1007/s10998-026-00720-1},
 url = {https://arxiv.org/abs/2506.20820},
 arXiv = {arXiv:2506.20820}
}

@misc{hjp2025,
 author = {Jong In Han and Jaewoo Jung and Euisung Park},
 title = {On quadratic persistence and {Pythagoras} numbers of totally real projective varieties},
 year = {2025},
 howpublished = {Preprint, {arXiv}:2506.13247 [math.{AG}] (2025)},
 url = {https://arxiv.org/abs/2506.13247},
 arXiv = {arXiv:2506.13247}
}

@misc{kowalski2021,
 author = {Kowalski, Emmanuel},
 title = {Exponential sums over finite fields: elementary methods},
 year = {2021},
 howpublished = {Lecture notes, ETH Z{\"u}rich, version of September 14, 2021},
 url = {https://people.math.ethz.ch/~kowalski/exponential-sums-elementary.pdf}
}

@article{krs2022,
 author = {Kr{\'a}sensk{\'y}, Jakub and Ra{\v{s}}ka, Martin and Sgallov{\'a}, Ester},
 title = {Pythagoras numbers of orders in biquadratic fields},
 fjournal = {Expositiones Mathematicae},
 journal = {Expo. Math.},
 issn = {0723-0869},
 volume = {40},
 number = {4},
 pages = {1181--1228},
 year = {2022},
 language = {English},
 doi = {10.1016/j.exmath.2022.06.002},
 zbMATH = {7629342},
 Zbl = {1508.11047}
}

\begin{small}

\vspace{5pt}

\noindent
Bartłomiej Bychawski

\noindent
Institute of Mathematics

\noindent
Faculty of Mathematics and Computer Science

\noindent
Jagiellonian University

\noindent
ul. Łojasiewicza 6, 30-348 Kraków, Poland

\noindent
e-mail: bartlomiej.bychawski@student.uj.edu.pl

\vspace{5pt}

\noindent
Bartosz Głowacki

\noindent
Institute of Mathematics

\noindent
Faculty of Mathematics and Computer Science

\noindent
Jagiellonian University

\noindent
ul. Łojasiewicza 6, 30-348 Kraków, Poland

\noindent
e-mail: bartosz16.glowacki@student.uj.edu.pl

\vspace{5pt}

\noindent
Tomasz Kowalczyk

\noindent
Institute of Mathematics

\noindent
Faculty of Mathematics and Computer Science

\noindent
Jagiellonian University

\noindent
ul. Łojasiewicza 6, 30-348 Kraków, Poland

\noindent
e-mail: tomek.kowalczyk@uj.edu.pl

\end{small}

\end{document}